\newtheorem{thm}{Theorem}[section]
\newtheorem{cor}[thm]{Corollary}
\newtheorem{defin}[thm]{Definition}
\newtheorem{lemma}[thm]{Lemma}
\newtheorem{rmk}[thm]{Remark}
\newcommand{\Sss}{\mbox{$\Sigma$}}
\newcommand{\Sssp}{\mbox{$\Sigma_{t_+}$}}
\newcommand{\Sssm}{\mbox{$\Sigma_{t_-}$}}
\newcommand{\Sssz}{\mbox{$\Sigma_{t_0}$}}
\newcommand{\Ggg}{\mbox{$\Gamma$}}
\newcommand{\bdd}{\mbox{$\partial$}}
\newcommand{\fp}{\mbox{$(f_1\uparrow \uparrow f_2)$}}
\newcommand{\fm}{\mbox{$(f_1\uparrow \downarrow f_2)$}}
\begin{document}

\subjclass{57M25, 57M27, 57M50}

\keywords {knot distance, bridge position, Heegaard splitting,
strongly irreducible, weakly incompressible}

\title{Flipping bridge surfaces and bounds on the stable bridge number}

\author{Jesse Johnson}
\address{\hskip-\parindent
Jesse Johnson\\
  Mathematics Department \\
Oklahoma State University.}
\email{jjohnson@math.okstate.edu}

\author{Maggy Tomova}
\address{\hskip-\parindent
 Maggy Tomova\\
  Mathematics Department \\
University of Iowa.}
\email{mtomova@math.uiowa.edu}

\begin{abstract}
We show that if $K$ is a knot in $S^3$ and $\Sigma$ is a bridge sphere for $K$ with high distance and $2n$ punctures, the number of perturbations of $K$ required to interchange the two balls bounded by $\Sigma$ via an isotopy is $n$. We also construct a knot with two different bridge spheres with $2n$ and $2n-1$ bridges respectively for which any common perturbation has at least $3n-1$ bridges. We generalize both of these results to bridge surfaces for knots in any 3-manifold. 
\end{abstract}

\thanks{Research partially supported by an NSF grant.}
\maketitle

\section{Introduction}

Reidemeister~\cite{Re} and Singer~\cite{Sin} showed that any two Heegaard splittings for a 3-manifold $M$ have a common stabilization, i.e., if $\Sss$ and $\Sss'$ are two Heegaard surfaces for $M$ there exists a Heegaard surface $\Sss''$ that is isotopic to a stabilization of $\Sss$ as well as to a stabilization of $\Sss'$. A long standing question in Heegaard splittings asks what is the minimal genus of $\Sss''$ in terms of the genera of $\Sss$ and $\Sss'$. Examples of Heegaard splittings that required many stabilizations were presented in~\cite{DB},~\cite{JJ2} and~\cite{HTT}.

Bridge splittings are the natural extension of Heegaard splittings in the context of a compact orientable manifold $M$ containing a properly embedded tangle $T$. A \textit{bridge splitting} for $(M,T)$ is a triple $(\Sss, (H^+, \tau^+), (H^-, \tau^-))$ where $\Sss$ is a connected surface that decomposes $M$ into compression bodies $H^+$ and $H^-$ and decomposes $T$ into collections of arcs $\tau^+$ and $\tau^-$ that are embedded in the corresponding compression bodies in specific ways. The surface $\Sss$ is called a \textit{bridge surface} for $(M, T)$. Note that if $T=\emptyset$, then $(\Sss, (H^+, \tau^+), (H^-, \tau^-))$ is a Heegaard splitting for $M$. 

Given a bridge surface $\Sss$ of $(M,T)$ one can always obtain another bridge surface $\Sss''$ by performing stabilizations and perturbations. These operations are discussed in detail in \cite{STo3} and they behave in a manner similar to stabilizations of Heegaard splittings. In this paper we consider pairs of bridge splittings $\Sss$ and $\Sss'$ for $(M,T)$ and study bridge splittings $\Sss''$ that can be obtained from both $\Sss$ and $\Sss'$ via stabilizations and perturbations. The results we obtain are similar but somewhat weaker than the results obtained by Johnson for Heegaard splittings in \cite{JJ} and \cite{JJ2} due to the additional difficulties introduced by the presence of the knot. 

At first we will distinguish a bridge splitting $(\Sss, (H^+, \tau^+), (H^-, \tau^-))$ from the bridge splitting $(\Sss, (H^-, \tau^-), (H^+, \tau^+))$ in which the order of the compression bodies is reversed. We ask what is the minimum value of $2-\chi(\Sss'')$ such that $(\Sss'', (H''^+, \tau''^+), (H''^-, \tau''^-))$ is isotopic to stabilizations and perturbations of both bridge splittings $(\Sss, (H^+, \tau^+), (H^-, \tau^-))$ and $(\Sss, (H^-, \tau^-), (H^+, \tau^+))$. This value is  called the {\em flip Euler characteristic of} $\Sss$ and it is analogous to the flip genus of a Heegaard splitting defined in \cite{JJ}. We give a bound on this quantity in terms of the Euler characteristic of $\Sss$ and the distance of $T$ with respect to $\Sss$ (Definition \ref{def:distance}). 

\begin{thm}\label{thm:flip}
Let $T$ be a prime tangle properly embedded in a compact orientable irreducible 3-manifold $M$ and let $(\Sss, (H^+, \tau^+), (H^-, \tau^-))$ be a bridge splitting for $(M,T)$ such that $\chi(\Sss)\leq -4$. Then the flip Euler characteristic of $\Sss$ is at least $max\{2-2\chi(\Sss), d(\Sss,T)\}$. 
\end{thm}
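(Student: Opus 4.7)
The plan is to prove the two lower bounds separately, since the stated bound is the maximum of the two quantities. Let $\Sss''$ be a bridge surface realizing the flip Euler characteristic; by definition, $\Sss''$ is isotopic to a sequence of stabilizations and perturbations applied to $(\Sss, (H^+, \tau^+), (H^-, \tau^-))$ and also to a (generally different) such sequence applied to the flipped splitting $(\Sss, (H^-, \tau^-), (H^+, \tau^+))$. Composing the two sequences produces an isotopy of $(M,T)$ that carries $\Sss$ back to $\Sss$ while interchanging the two compression bodies, and I will exploit this isotopy in both arguments.

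For the inequality flip Euler characteristic $\geq d(\Sss,T)$, I would apply a sweepout argument analogous to the distance bounds for stabilized Heegaard splittings in \cite{JJ}. The interpolating isotopy described above carries a compressing disk for $H^+$ to a compressing disk for $H^-$. I would track the boundary of such a disk through the elementary moves and, using the prime-tangle hypothesis together with the Scharlemann--Tomova theory of stabilization/perturbation for bridge splittings, produce a path in the arc/curve complex of $\Sss \setminus T$ whose length is bounded linearly in the number of moves, hence by $2-\chi(\Sss'')$. Since the endpoints of this path bound essential disks on opposite sides of $\Sss$ from the viewpoint of the original splitting, this yields $d(\Sss,T) \leq 2-\chi(\Sss'')$, which is exactly the flip Euler characteristic.

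For the inequality flip Euler characteristic $\geq 2 - 2\chi(\Sss)$, I would adapt Johnson's flip-genus argument for Heegaard splittings from \cite{JJ} to the bridge setting. One views the sequence of moves connecting the two oriented splittings via $\Sss''$ as a generalized bridge splitting, or equivalently a Morse-type deformation of $M$ compatible with $T$. Each stabilization adds a handle and each perturbation creates a cancellable bridge pair; because $T$ is prime and $M$ is irreducible, no disk swap or tangle reduction can cancel these contributions. A bookkeeping of handles and bridges added along both halves of the sequence should show that $2-\chi(\Sss'') \geq 2(2-\chi(\Sss)) - 2 = 2 - 2\chi(\Sss)$, with the hypothesis $\chi(\Sss) \leq -4$ ensuring enough complexity for the combinatorial accounting to be tight.

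The main obstacle will be the first inequality: extending the distance-bounding sweepout machinery from Heegaard splittings to bridge splittings with tangles is delicate because perturbations, unlike stabilizations, modify the tangle relative to the surface rather than the surface alone. One must verify that each elementary move corresponds to a bounded number of steps in the curve/arc complex of $\Sss \setminus T$, that the accumulated distance genuinely connects compressing disks on opposite sides of $\Sss$, and that compressing disks for $\Sss''$ can always be traced back through the unperturbing/destabilizing isotopies to essential disks for $\Sss$ with controlled boundary. The prime-tangle and irreducibility assumptions will be essential in ruling out degenerate behavior during this tracking.
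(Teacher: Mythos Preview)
Your approach has a structural problem: you attempt to establish the two lower bounds \emph{separately}, but the paper's argument produces them as alternatives in a dichotomy, not as simultaneous conclusions. The bound $2-\chi(\Sss'') \geq 2-2\chi(\Sss)$ simply does not hold unconditionally under the stated hypotheses: a bridge surface with $\chi(\Sss)\leq -4$ that is already flippable (for instance a many-times-perturbed bridge sphere for a knot admitting an ambient symmetry exchanging the two sides) has flip Euler characteristic exactly $2-\chi(\Sss)$, strictly less than $2-2\chi(\Sss)$. So your proposed ``bookkeeping of handles and bridges'' for the second inequality is aimed at a false target; no accounting of elementary moves will force $\Sss''$ to be twice as complicated as $\Sss$ in general.

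What the paper actually does is run a one-parameter family of sweep-outs $\{g_r\}$ for $\Sss''$, interpolating between one that spans the sweep-out $f$ of $\Sss$ positively and one that spans it negatively (Lemmas~\ref{lem:span} and~\ref{lem:iso}). At the transition some $g_r$ must either span $f$ in both directions, giving $\chi(\Sss'')\leq 2\chi(\Sss)$ via Corollary~\ref{cor:span}; or split $f$, giving $d(\Sss,T)\leq 2-\chi(\Sss'')$ via Theorem~\ref{thm:split}; or produce a degenerate vertex in the graphic, which Lemma~\ref{lem:other} excludes when $\chi(\Sss)\leq -4$. Only one of the two inequalities is thereby guaranteed, so the argument as written actually delivers the minimum of the two quantities rather than the maximum appearing in the statement. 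Your distance argument via tracking disk boundaries through perturbation moves is also not the paper's mechanism and is not obviously sound: perturbations alter the tangle relative to the surface, so there is no evident map sending a compressing curve before the move to one after it at bounded curve-complex distance. The paper's control on distance comes instead from bounding the diameter of the set of projected level curves by $-\chi(S)$ for a subsurface $S\subset\Sss''$ in the splitting case.
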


\begin{cor}\label{cor:flip}
Let $T$ be a prime knot in $S^3$ and let $\Sss$ be a bridge sphere for $T$ with $n\geq 3$ bridges such that $d(T, \Sss) \geq 4n$. If $(\Sss'', (H''^+, \tau''^+), (H''^-, \tau''^-))$ is a minimal bridge number perturbation of both bridge splittings $(\Sss, (H^+, \tau^+), (H^-, \tau^-))$ and $(\Sss, (H^-, \tau^-), (H^+, \tau^+))$, then $T$ has exactly $2n$ bridges with respect to $\Sss''$. 
\end{cor}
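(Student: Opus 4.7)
The strategy is to derive the lower bound $b \geq 2n$ directly from Theorem \ref{thm:flip}, and then to exhibit an explicit common perturbation achieving $b = 2n$.

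First I would verify the hypotheses of Theorem \ref{thm:flip}: the bridge sphere $\Sss$ has $2n$ punctures, so $\chi(\Sss) = 2 - 2n \leq -4$ when $n \geq 3$; the knot $T$ is prime and $S^3$ is a compact, orientable, irreducible 3-manifold. Thus Theorem \ref{thm:flip} applies and gives
\[
2 - \chi(\Sss'') \;\geq\; \max\{2 - 2\chi(\Sss),\, d(\Sss,T)\} \;=\; \max\{4n-2,\, d(\Sss,T)\} \;\geq\; 4n,
\]
where the last inequality uses the hypothesis $d(\Sss, T) \geq 4n$. Since by assumption $\Sss''$ is obtained from the two splittings only via perturbations (no stabilizations), $\Sss''$ remains a bridge sphere with $\chi(\Sss'') = 2 - 2b$ where $b$ is its bridge number, so the bound rearranges to $2b \geq 4n$, i.e.\ $b \geq 2n$.

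For the matching upper bound I would construct an explicit common perturbation of $(\Sss, (H^+, \tau^+), (H^-, \tau^-))$ and $(\Sss, (H^-, \tau^-), (H^+, \tau^+))$ that has exactly $2n$ bridges. The natural candidate is the bridge surface $\Sss''$ produced by performing a single perturbation along each of the $n$ arcs of $\tau^+$; this is obtained from $(\Sss, (H^+, \tau^+), (H^-, \tau^-))$ by a sequence of $n$ perturbations and yields $2n$ arcs on each side of $\Sss''$. The key claim is that this $\Sss''$ is \emph{flip-invariant}, meaning there is an ambient isotopy of $(M,T)$ exchanging its two compression bodies, so that the same surface is equally realized as $n$ perturbations of the flipped splitting $(\Sss, (H^-, \tau^-), (H^+, \tau^+))$. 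Combining the two bounds then forces the minimal common perturbation to have bridge number exactly $2n$.

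The main obstacle is establishing the flip-invariance used in the upper bound. The underlying idea is that each individual perturbation introduces a symmetric cancel pair of bridge disks, one on each side of the new surface, so by perturbing each of the $n$ original bridges once the collection of added bridges and disks can be made combinatorially symmetric under the swap of the two sides. Turning this combinatorial symmetry into an honest ambient isotopy of $(M,T)$ requires care, but once it is in place the conclusion is immediate: every minimal common perturbation $\Sss''$ has exactly $2n$ bridges.
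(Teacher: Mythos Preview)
Your proposal is correct and follows essentially the same approach as the paper: the lower bound $b \geq 2n$ comes directly from Theorem~\ref{thm:flip} via the Euler-characteristic computation you wrote out, and the upper bound comes from exhibiting an explicit flippable $2n$-bridge sphere obtained by perturbing each of the $n$ bridges once. The paper handles the upper bound by simply pointing to a figure showing this construction, so your verbal description of the construction and your identification of flip-invariance as the point requiring care are exactly in line with what the paper does (only more explicit).
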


We next consider the problem of distinguishing bridge surfaces without keeping track of the order of compression bodies.  To make this clear, we will consider only the bridge surface rather than the bridge splitting. In this case we obtain the following result.

\begin{thm} \label{thm:knot}
There exist infinitely many manifolds $M_{\alpha}$ each containing a knot $K_{\alpha}$ so that each pair $(M_{\alpha}, K_{\alpha})$ has two bridge surfaces $\Sss$ and $\Sss'$ with $\chi(\Sss)=2s$ and $\chi(\Sss')=2s-2$ so that for every bridge surface $\Sss''$ that is isotopic to stabilizations and perturbations of both $\Sss$ and $\Sss'$, $\chi(\Sss')\leq 3s+2$. 
\end{thm}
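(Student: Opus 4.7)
The plan is to construct the pairs $(M_\alpha, K_\alpha)$ explicitly, using a high-distance bridge surface as a rigid ``core'' to obstruct any common perturbation of $\Sigma$ and $\Sigma'$ from being too simple. This parallels the Hass--Thompson--Thurston and Johnson constructions in the Heegaard splitting setting, with Theorem~\ref{thm:flip} playing the role of Johnson's flip genus bound.

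First, I would start with a knot $K_0$ in a 3-manifold admitting a bridge surface $\Sigma_0$ whose distance $d(\Sigma_0, K_0)$ is much larger than $|s|$; arbitrarily high-distance bridge surfaces can be produced by standard techniques (random walks in the curve complex, iterated high-twisting, etc.). From $K_0$ I would build $K_\alpha$ by inserting a small, controlled tangle inside a ball $B$ that meets $\Sigma_0$ in a disk, and take $\Sigma$ and $\Sigma'$ to be two distinct bridge surfaces for $(M_\alpha, K_\alpha)$ obtained by two different ``resolutions'' of the tangle inside $B$, each agreeing with $\Sigma_0$ outside $B$ and arranged to have Euler characteristics $2s$ and $2s-2$. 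Varying $K_0$ produces infinitely many pairs.

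The heart of the argument is the lower bound: every common stabilization/perturbation $\Sigma''$ of $\Sigma$ and $\Sigma'$ satisfies $\chi(\Sigma'') \leq 3s+2$. The key step is to show that outside $B$ the surface $\Sigma''$ serves as a common perturbation of two opposite orientations of $\Sigma_0$; that is, passing from $\Sigma$ to $\Sigma'$ through a common perturbation effectively ``flips'' the sides of $\Sigma_0$ across $B$. Once this is set up, Theorem~\ref{thm:flip} applied to the core gives an inequality of the form $2 - \chi(\Sigma''|_{M_\alpha \setminus B}) \geq 2 - 2\chi(\Sigma_0)$, and accounting for the extra punctures contributed inside $B$ (roughly $|s|/2$ of them, for the bookkeeping to work out) produces the bound $\chi(\Sigma'') \leq 3s+2$.

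The principal obstacle is this reduction to the flip theorem: one must rule out common perturbations that interact with the core in unexpected ways, and work without tracking the ordering of the compression bodies, so that the flip of $\Sigma_0$ is forced only up to isotopy. This will require a thin-position or untelescoping argument inside $M_\alpha$ showing that any minimum-complexity common perturbation can be isotoped so that its intersection with $\partial B$ is essentially the original splitting curve $\Sigma_0 \cap \partial B$, at which point Theorem~\ref{thm:flip} applies to the restriction of $\Sigma''$ to $M_\alpha \setminus B$ and yields the claimed bound.
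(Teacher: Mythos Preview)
Your outline has the right instinct---build the examples around a high-distance ``core''---but it diverges from the paper's actual argument in two essential ways, and the gap you yourself flag is in fact the entire content of the proof.

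First, the construction. The paper does not insert an unspecified tangle into a ball; it takes the connect sum $(M\#M, K\#K)$ of a single high-distance pair $(M,K,\Sigma)$ with itself. The two bridge surfaces $\Sigma_1$, $\Sigma_2$ arise from amalgamating two explicit generalized bridge splittings of $M\#M$: one uses the obvious four-punctured sphere in the connecting $S^2\times I$, the other tubes two boundary-parallel spheres there. This is what forces the sweep-out of $\Sigma_1$ to span both summands positively while that of $\Sigma_2$ spans one positively and one negatively. Your vague ``two resolutions of a tangle in $B$'' does not come with any such orientation-reversing property, and without it there is no reason a common perturbation should be forced to flip anything.

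Second, and more seriously, the paper does \emph{not} reduce to Theorem~\ref{thm:flip}. It reruns the spanning/splitting machinery directly in $M\#M$, now with \emph{two} target sweep-outs $f_1,f_2$ (one for each summand). A one-parameter family $g_r$ of sweep-outs for the common perturbation must at some moment either split one of the $f_i$ (giving the distance bound), or span $f_1$ both positively and negatively while still spanning $f_2$. In that second case the bound $\chi(\Sigma'')\le 3\chi(\Sigma)+2$ comes from $2\chi(\Sigma)$ in the first summand (double spanning, via Theorem~\ref{thm:span}) plus $\chi(\Sigma)+2$ in the second. The ``$+2$'' is the delicate part: the values of $s$ at which the double-spanning and the single-spanning occur need not coincide, and the paper spends most of Lemma~\ref{lem:example} on an $\alpha$-disk argument tracking how $\Sigma''$ crosses the decomposing sphere $S$ between those two levels, showing that at most one twice-punctured compressing disk can appear. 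Your proposed ``thin-position or untelescoping argument'' to normalize $\Sigma''\cap\partial B$ is precisely this step, and it is not something Theorem~\ref{thm:flip} hands you; your bookkeeping (``roughly $|s|/2$ punctures inside $B$'') does not match the actual source of the extra $\chi(\Sigma)+2$.
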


As a corollary of the above we obtained the following result:

\begin{cor}\label{cor:secondmain}
For every $n \geq 2$ there exists a knot $\tilde K$ in $S^3$ with bridge spheres $\Sigma$ and $\Sigma'$ with bridge numbers $2n-1$ and $2n$ respectively such that every bridge surface $\Sigma''$ which is isotopic to a perturbation of both has at least $3n-1$ bridges.
\end{cor}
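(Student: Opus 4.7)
The plan is to derive Corollary \ref{cor:secondmain} directly from Theorem \ref{thm:knot} by a suitable choice of parameters. A bridge sphere with $b$ bridges has $2b$ punctures and hence Euler characteristic $2 - 2b$, so to realize bridge numbers $2n-1$ and $2n$ I want $\chi(\Sigma) = 4 - 4n$ and $\chi(\Sigma') = 2 - 4n$. Matching these against $2s$ and $2s-2$ in the notation of Theorem \ref{thm:knot} forces $s = 2 - 2n$, and I would invoke the theorem with this value of $s$.

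The next step is to ensure that the output of Theorem \ref{thm:knot} gives a knot in $S^3$ with sphere bridge surfaces, rather than merely in some 3-manifold $M_\alpha$ with bridge surfaces of some genus. This amounts to inspecting the explicit construction used to prove the theorem and checking that it is naturally realized by operations on knots in $S^3$ (typically twisting or connect-sum-like moves) whose associated bridge surfaces are spheres. The knot $\tilde K$ of the corollary is then the resulting $K_\alpha \subset S^3$, with $\Sigma$ and $\Sigma'$ the two bridge spheres of bridge numbers $2n-1$ and $2n$ respectively.

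Finally, any bridge surface $\Sigma''$ isotopic to a perturbation of both $\Sigma$ and $\Sigma'$ is itself a sphere, since perturbation does not alter the genus of the underlying surface. Its Euler characteristic is therefore $2 - 2b''$, where $b''$ denotes its bridge number, and substituting $s = 2 - 2n$ into the Euler characteristic bound from Theorem \ref{thm:knot} translates directly into the inequality $b'' \geq 3n - 1$, which is exactly the conclusion of the corollary. Note that the corollary only requires $\Sigma''$ to be a common \emph{perturbation} (not a common stabilization-plus-perturbation), so this is the easier case of Theorem \ref{thm:knot} and the sphericity of $\Sigma''$ holds for free.

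The main nontrivial step in this plan is confirming that the construction underlying Theorem \ref{thm:knot} can be carried out inside $S^3$ with sphere bridge surfaces of the precise bridge numbers $2n-1$ and $2n$; everything else is parameter-matching and bookkeeping between Euler characteristic and bridge number.
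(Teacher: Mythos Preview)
Your approach is the same as the paper's: the corollary is obtained by running the construction behind Theorem~\ref{thm:knot} with $M=S^3$ and $\Sigma$ an $n$-bridge sphere for a high-distance knot $K$, so that $\tilde K=K\#K$ and the two amalgamated bridge surfaces $\Sigma_1,\Sigma_2$ are spheres of bridge numbers $2n-1$ and $2n$; the paper itself gives no separate argument beyond the sentence ``In particular this construction gives examples of knots in $S^3$\ldots''.

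There is, however, a concrete arithmetic gap in your final step. With $s=2-2n$, the bound $\chi(\Sigma'')\le 3s+2$ from Theorem~\ref{thm:knot} (equivalently Lemma~\ref{lem:example}, which gives $\chi(\Sigma'')\le 3\chi(\Sigma)+2$) reads $\chi(\Sigma'')\le 8-6n$; for a sphere this says $2-2b''\le 8-6n$, hence $b''\ge 3n-3$, not $3n-1$. So the substitution does \emph{not} ``translate directly'' into $b''\ge 3n-1$ as you claim. This discrepancy between Lemma~\ref{lem:example} and the stated Corollary~\ref{cor:secondmain} is already present in the paper, but you should not assert the arithmetic works when it does not. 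A second point: the construction in the proof of Theorem~\ref{thm:knot} explicitly requires the seed bridge surface to satisfy $\chi(\Sigma)\le -4$, i.e.\ the seed knot has at least $3$ bridges; hence the case $n=2$ of the corollary is not produced by that construction, and your parameter-matching should record this restriction rather than silently asserting ``for every $n\ge 2$''. Finally, to instantiate the construction in $S^3$ you need the existence of $n$-bridge knots with $d(\Sigma,K)\ge -3\chi(\Sigma)=6n-6$; this is known, but it is an external input you should cite rather than leave implicit.
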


In Section \ref{sec:prel} we give the definition of a bridge splitting for a pair $(M,T)$ and explain how a sweep-out is associated to any bridge splitting. Furthermore we define two conditions on a pair of sweep-outs:  A sweep-out $g$ can split a second sweep-out $f$ for the same manifold or can span it. Generically these are the only two options for how $g$ behaves with respect to $f$. 

In Sections \ref{sec:span} and \ref{sec:split} we consider two bridge splittings $\Sss$ and $\Sss'$ for $(M,T)$ with associated sweep-outs $f$ and $g$. We show that if $g$ spans $f$, then the Euler characteristic of the punctured bridge surface $\Sss$ is bounded below by the Euler characteristic of the punctured bridge surface $\Sss'$. Next we define the distance of a bridge splitting and we show that if $g$ splits $f$ then the distance of $\Sss$ is bounded above by the Euler characteristic of $\Sss'$. Finally we consider the case where $g$ neither spans not splits $f$ and we show that this can only occur if $\chi(\Sss)\geq -3$. Using these results in Section \ref{sec:flipping} we prove Theorem \ref{thm:flip} and in Section \ref{sec:example} we prove Theorem \ref{thm:knot}.

\section{Preliminaries}\label{sec:prel}

\subsection{Compression bodies containing trivial arcs} Let $H$ be a compression body. Recall that a spine of $H$ is a complex $\bdd_- H \cup \Gamma$ where $\Gamma \subset H$ is a properly embedded finite graph with no valence $1$ vertices in the interior of $H$ and such that $H$ is isotopic to a regular neighborhood of $\bdd_- H \cup \Gamma$. 
A set of properly embedded arcs $\tau=\{t_1,..., t_n\}$ in $H$ is {\em trivial} if each $t_i$ is either parallel to $\bdd_+H$ or is a vertical arc with one endpoint in $\bdd_+H$ and the other endpoint in $\bdd_-H$. If an arc is parallel to $\bdd_+H$ the disk of parallelism is called a {\em bridge disk}. We will denote the pair of a compression body $H$ containing properly embedded trivial arcs $\tau$ by $(H, \tau)$. The arcs $\tau$ can be isotoped in $H$ so that the projection $H - spine(H) \cong \bdd H \times [0, 1) \to
[0, 1)$ has no critical points in the vertical arcs and a single critical point, say a maximum, in each boundary parallel arc. Let $s_i$ be a collection of vertical arcs each connecting a single maximum of $\tau$ to a spine of $H$. Let $spine((H, \tau))=spine(H) \cup \{s_i\}$ and note that there is a map $(\bdd H, \bdd H \cap \tau) \times I \to (H, \tau)$
which is a homeomorphism except over the spine, and the map
gives a neighborhood of the spine a mapping cylinder
structure.

\subsection{Bridge splittings} Let $T$ be a properly embedded tangle in a compact oriented $3$-manifold $M$ and let $\Sss$ be a properly embedded surface transverse to $T$ such that $\Sss$ splits $M$ into two compression bodies $H^+$ and $H^-$ and such that $\tau^+=H^+\cap T$ and $\tau^-=H^-\cap T$ are trivial arcs in the corresponding compression body. In this case we say that $(\Sss, (H^+, \tau^+), (H^-, \tau^-))$ is a bridge splitting for $(M,T)$ and $\Sss$ is a bridge surface. As every compact orientable 3-manifold has a Heegaard splitting it is easy to see that every properly embedded tangle in any 3-manifold has a bridge splitting.

\subsection{Surfaces in $(M,T)$} Suppose $M$ is a compact, irreducible, orientable 3-manifold containing a properly embedded tangle $T$ and let $F$ be a surface in $M$ transverse to $T$. The surface $F$ gives rise to a punctured surface in the complement of a regular neighborhood $\eta(T)$ of $T$. We will refer to this punctured surface as $F$ also and we will specify if we are referring to the punctured or the closed surface whenever it is not clear from context. Two surfaces in $(M,T)$ will be considered isotopic only if there is an isotopy between them transverse to the tangle. 

A simple closed curve in $F-\eta(T)$ is {\em essential} if it does not bound a disk in $F$ and it is not parallel to the boundary of a puncture. A properly embedded arc in $F$ with endpoints in $F \cap \bdd M$ is essential if it does not cobound a disk with an arc in $F \cap \bdd M$. An embedded disk $D$ in $M$ is a {\em compressing disk} for $F$ if $D \cap T =\emptyset$, $D \cap F = \bdd D$ and $\bdd D$ is an essential curve in $F-\eta(T)$. A properly embedded disk $D^c$ in $M$ is a {\em cut-disk} for $F$ if $D^c \cap T$ is a single point in the interior of $D^c$, $D^c \cap F = \bdd D^c$ and $\bdd D^c$ is an essential curve in $F-\eta(T)$. A {\em c-disk} is either a cut or a compressing disk.

\subsection{Obtaining new bridge splittings from known ones} We will consider two geometric operations which allow us to produce new bridge surfaces from existing ones. These are generalizations of stabilizations for Heegaard splittings. Following \cite{HS1}, the bridge surface $\Sigma$ will be called {\em stabilized} if there is a pair of 
compressing disks on opposite sides of $\Sigma$ that intersect in a single 
point. The bridge surface is called 
{\em perturbed} if there is a pair of 
bridge disks $D_i$ on opposite sides of $\Sigma$ 
such that $ \emptyset \neq (\bdd D_1 \cap \bdd D_2) \subset (\Sigma \cap T)$
and $|\bdd D_1 \cap \bdd D_2|=1$. These operations are discussed in detail in \cite{STo3}.

\subsection{Sweep-outs} Suppose $(M,T)=(H^+, \tau^+)\cup_{\Sss} (H^-, \tau^-)$. From the definition of a spine one can construct a map $f:M \to [-1,1]$ such that $f^{-1}(1)$ is isotopic to a spine of $(H^+,\tau^+)$, $f^{-1}(-1)$ is isotopic to a spine of $(H^-, \tau^-)$ and $f^{-1}(t)$ is a surface isotopic to the punctured surface $\Sss$ for every $t \in (-1,1)$. This function is called a \textit{sweep-out} representing $(\Sss,(H^-, \tau^-), (H^+, \tau^+))$.

We give a brief overview of how sweep-outs can be applied to study bridge surfaces for tangles in a 3-manifold. Further details can be found in \cite{T2}. Consider a tangle properly embedded in a 3-manifold with two bridge splittings. Let $f$ be a sweep-out representing the bridge splitting $(\Sss,(H^-, \tau^-), (H^+, \tau^+))$ and let $g$ be another sweep-out representing a second bridge splitting for $(M,T)$ which we denote $(\Sss',(H'^-, \tau'^-), (H'^+, \tau'^+))$.  

Consider the two parameter sweep-out $f \times g$ mapping $(M,T)$ into the square $[-1,1] \times [-1,1]$.  Each point $(s,t)$ in the square represents a pair of surfaces $\Sss_t = f^{-1}(t)-\eta(T)$ isotopic to the punctured surface $\Sss$ and $\Sss'_s = g^{-1}(s)-\eta(T)$ isotopic to $\Sss'$.  The {\em graphic} is the subset $\Ggg$ of the square consisting of all points $(s,t)$ where either $\Sigma_t$ is tangent to $\Sigma'_s$ or $\Sigma_t \cap \Sigma'_s$ contains a point of $T$.  We say that $f \times g$ is {\em generic} if it is stable on the complement of the spines and each arc $\{t\} \times [-1,1]$ and $[-1,1] \times \{s\}$ contains at most one vertex of the graphic. If $f\times g$ is generic then at each (valence four) vertex of $\Ggg$ there are two points of tangency, two points of $T$ in the intersection, or one of each.  By general position of the spines $f^{-1}(\pm 1)$ with the surface $\Sss'$, the graphic $\Ggg$ is incident to $\bdd I \times I$ in only a finite number of points corresponding to tangencies between $f^{-1}(\pm 1)$ and $\Sss'$. 

\subsection{Splitting and spanning sweep-outs} Suppose $f$ and $g$ are sweep-outs for $(M,T)$ and $f \times g$ is generic. Generalizing \cite{JJ}, for some fixed values of $s$ and $t$ we will say that $\Sss_t$ is \textit{mostly above} $\Sss'_s$ if each component of $\Sss_t \cap H'^-_s$ (if there are any) is contained in a disk or a once-punctured disk in $\Sss_t$. Similarly we will say that $\Sss_t$ is \textit{mostly below} $\Sss'_s$ if each component of $\Sss_t \cap H'^+_s$ is contained in a disk or once-punctured disk in $\Sss_t$.  We will say that $g$ \textit{spans} $f$ if there are values $t_+$, $t_-$ and $s$ for which $\Sss_{t_+}$ is mostly above $\Sss'_s$ and $\Sss_{t_-}$ is mostly below $\Sss'_s$. We will say that $g$ spans $f$ \textit{positively} if $t_- <t_+$ and \textit{negatively} otherwise. These conditions are shown at the top of Figure \ref{fig:splitspan}.
Note that $g$ may span $f$ both positively and negatively.

We will say that $g$ \textit{splits} $f$ if there is a value of $s$ such that the horizontal line $[-1,1] \times \{s\} \subset [-1,1]\times[-1,1]$ does not intersect any vertices of $\Gamma$ and for every $t$ the surface $\Sigma_t$ is neither mostly above nor mostly below $\Sigma'_s$.  This is shown at the bottom left of Figure \ref{fig:splitspan}. Note that this condition is equivalent to the condition that there exists an $s$ such that for every $t$, $\Sigma'_s \cap \Sigma_t$ contains at least one curve that is essential in $\Sss_t$.

\begin{figure}
\begin{center}
\includegraphics[scale=.5]{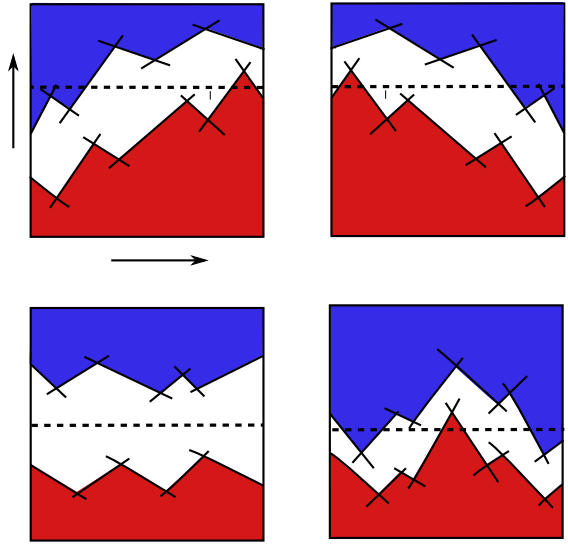}
\caption{The blue regions are those where $\Sigma_t$ is mostly below $\Sigma'_s$ and the red regions are where $\Sigma_t$ is mostly above $\Sigma'_s$. The figures show $g$ spanning $f$ positively, negatively, both positively and negatively and $g$ splitting $f$.}
\label{fig:splitspan}
\end{center}
\end{figure}

\section{Spanning sweep-outs and bounds on Euler characteristic} \label{sec:span}

As in the last section, we will let $f$ and $g$ be sweep-outs for the pair $(M,T)$ associated to the two bridge splittings $(\Sss,(H^-, \tau^-), (H^+, \tau^+))$ and $(\Sss',(H'^-, \tau'^-), (H'^+, \tau'^+))$ respectively. Define $\Sigma_t = f^{-1}(t)-\eta(T)$, and $\Sigma'_s = g^{-1}(s)-\eta(T)$ for $s,t \in(-1,1)$.  We will also name the compression bodies $H'^-_s = g^{-1}(-1,s]$ containing the trivial arcs $\tau'^-_s$ and $H'^+_s = g^{-1}[s,1)$ containing the trivial arcs $\tau'^+_s$.  

\begin{thm}\label{thm:span}
Let $f$ and $g$ be sweep-outs associated to bridge surfaces $\Sigma$ and $\Sigma'$ for a prime, unsplit tangle $T$ in an irreducible orientable 3-manifold $M$. Suppose that $f \times g$ is generic and suppose that there are values $s$ and $t_+>t_0$ such that $\Sss_{t_+}$ is mostly above $\Sss'_s$ and $\Sss_{t_0}$ is mostly below $\Sss'_s$. Then there is a sequence of compressions and cut compressions of $\Sigma'_s$ after which there is a component of the compressed surface which is parallel to $\Sigma$. If there are values $s$ and $t_+>t_0>t_-$ such that $\Sss_{t_+}$ and $\Sss_{t_-}$ are mostly above $\Sss'_s$ and $\Sss_{t_0}$ is mostly below $\Sss'_s$ then there is a sequence of compressions and cut compressions of $\Sigma'$ after which there are two components of the compressed surface which are parallel to $\Sigma$.
\end{thm}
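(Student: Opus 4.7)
The plan is to exploit the mostly-above hypothesis at $t_+$ to produce c-disks for $\Ssss$ with interior in $H'^-_s$, use the mostly-below hypothesis at $t_0$ to produce c-disks with interior in $H'^+_s$, perform the corresponding compressions and cut-compressions on $\Ssss$, and then identify in the resulting surface a component parallel to $\Sss$. The two parallel components in the three-value case $t_-<t_0<t_+$ will emerge by running the same argument on each of the intervals $[t_-,t_0]$ and $[t_0,t_+]$.

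Before extracting any disks, I would arrange by standard innermost-disk and outermost-arc reductions that every curve of $\Sssp\cap\Ssss$ and of $\Sssz\cap\Ssss$ is essential in both surfaces. Any inessential curve, together with the subdisks it bounds in the two surfaces, produces an $S^2$ or twice-punctured $S^2$ in $M$; irreducibility of $M$ combined with primality and unsplitness of $T$ forces such a sphere to bound a ball (containing at most one trivial subarc of $T$), so the offending intersection can be eliminated by isotopy. This reduction preserves both the mostly-above and the mostly-below conditions.

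Next I would extract the c-disks. Each component $X$ of $\Sssp\cap H'^-_s$ sits inside a disk or once-punctured disk $D\subset\Sssp$, and a curve of $\Sssp\cap\Ssss$ that is innermost in $D$ among those bounding a subregion of $D$ lying in $H'^-_s$ cuts off a (possibly once-punctured) subdisk $D'\subset D$ with interior in $H'^-_s$ and essential boundary in $\Ssss$; this $D'$ is a c-disk for $\Ssss$ on the negative side. I would compress $\Ssss$ along $D'$ and iterate, reducing $\Sssp\cap H'^-_s$ until $\Sssp$ lies entirely in the positive side of the compressed surface. Symmetrically, the mostly-below condition at $t_0$ yields c-disks on the positive side, along which I would further compress until $\Sssz$ lies entirely in the negative side. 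Let $\widehat{\Ssss}$ denote the resulting surface.

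Since $\widehat{\Ssss}$ separates $\Sssp$ from $\Sssz$ in $M$, some component $F$ of $\widehat{\Ssss}$ lies in the product region of the sweep-out $f$ between $\Sssz$ and $\Sssp$. The restriction $f|_F$ has no essential critical levels, as any further compression would come from a c-disk already present in the maximal collection, so a standard Morse/thin-position argument --- again invoking irreducibility and primality to discard extraneous sphere components --- shows $F$ is isotopic to a regular level $f^{-1}(t)$ and hence to $\Sss$. For the three-value version I would repeat the argument on $[t_-,t_0]$ to obtain a second parallel copy of $\Sss$ in the region between $\Sssm$ and $\Sssz$, giving a second component of $\widehat{\Ssss}$ disjoint from $F$. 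The main technical obstacle I anticipate is verifying that the compressions on the two sides of $\Ssss$ can be realized as a single coherent sequence of c-disk operations and that $F$ is genuinely isotopic to $\Sss$ rather than merely homeomorphic to it; this is precisely where the prime/unsplit hypothesis on $T$ does its essential work.
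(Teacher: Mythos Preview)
Your strategy is the paper's: extract c-disks for $\Sigma'_s$ from innermost curves on $\Sigma_{t_\pm}$ and $\Sigma_{t_0}$, c-compress until disjoint from those levels, then find parallel copies of $\Sigma$ in the resulting product regions. One cosmetic issue: your clean-up step cannot make every intersection curve essential in \emph{both} surfaces, since the mostly-above hypothesis forces every curve of $\Sigma_{t_+}\cap\Sigma'_s$ to be inessential in $\Sigma_{t_+}$; what innermost-disk reductions actually give you is that no curve is inessential in both, which is enough for your second step. The paper simply processes one innermost (in $\Sigma_{t_+}$) curve at a time---isotoping if it is also inessential in $\Sigma'_s$, c-compressing otherwise---while maintaining a red/blue coloring that records which side of the evolving $\Sigma'_s$ each level surface lies on.

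The real gap is your final step. The assertion that ``$f|_F$ has no essential critical levels, as any further compression would come from a c-disk already present in the maximal collection'' is unjustified: the only c-disks you have used lie on the fixed levels $\Sigma_{t_+}$ and $\Sigma_{t_0}$, and nothing prevents the component $F\subset f^{-1}(t_0,t_+)$ from admitting further c-disks there. The paper closes this by restricting the compressed surface to $f^{-1}(t_-,t_+)$, then \emph{further} maximally c-compressing in the complement of $\Sigma_{t_-}\cup\Sigma_{t_0}\cup\Sigma_{t_+}$, and only then invoking the classification of c-incompressible surfaces in $(\Sigma\times I,\text{trivial arcs})$ from \cite[Corollary~3.7]{T1}: every component is a sphere disjoint from $T$, a sphere bounding a trivial arc, or parallel to $\Sigma$. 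Because the coloring survives these extra compressions, the final surface still separates the blue $\Sigma_{t_0}$ from the red $\Sigma_{t_\pm}$, forcing at least two parallel components. The prime/unsplit hypothesis alone will not make your $F$ c-incompressible; the additional maximal c-compression and the cited classification are the missing ingredients your ``Morse/thin-position'' sketch is standing in for.
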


Because the Euler characteristic is non-decreasing under c-compression, this theorem implies the following corollary: 

\begin{cor}
\label{cor:span}
Let $f$ and $g$ be sweep-outs associated to bridge surfaces $\Sigma$ and $\Sigma'$ for a prime, unsplit tangle $T$ in an irreducible orientable 3-manifold $M$. If $f \times g$ is generic and $g$ spans $f$ positively (or negatively) then $\chi(\Sigma')\leq \chi(\Sigma)$. If $g$ spans $f$ both positively and negatively then $\chi(\Sigma')\leq 2\chi(\Sigma)$.

\end{cor}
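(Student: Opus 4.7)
The plan is to extract an Euler--characteristic inequality directly from Theorem~\ref{thm:span}, by summing contributions of the components of the compressed surface. First assume $g$ spans $f$ positively: by definition there exist values $s,t_-,t_+$ with $t_-<t_+$ such that $\Sigma_{t_+}$ is mostly above $\Sigma'_s$ and $\Sigma_{t_-}$ is mostly below $\Sigma'_s$. Setting $t_0:=t_-$, the first part of Theorem~\ref{thm:span} produces a surface $\Sigma''$ obtained from $\Sigma'_s$ by a finite sequence of compressions and cut-compressions, at least one component $\Sigma''_0$ of which is parallel to $\Sigma$.

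The first key observation is that c-compression is monotone non-decreasing on the Euler characteristic of the punctured surface: a compression along a disk disjoint from $T$ raises the closed-surface Euler characteristic by $2$ and does not alter the puncture count, while a cut-compression along a disk meeting $T$ transversely in one point raises the closed-surface Euler characteristic by $2$ but also creates two new punctures. Hence $\chi(\Sigma')\leq \chi(\Sigma'')$. The second, and main, task is to show $\chi(\Sigma'')\leq \chi(\Sigma)$. The distinguished component $\Sigma''_0$ contributes exactly $\chi(\Sigma)$, so it suffices to show that every other component contributes at most $0$. The only components with positive punctured Euler characteristic are unpunctured and once-punctured spheres. By irreducibility of $M$ together with unsplitness of $T$, an unpunctured sphere bounds a ball disjoint from $T$ and can be discarded without affecting $\Sigma''_0$; by primeness of $T$, a once-punctured sphere bounds a ball meeting $T$ in a boundary-parallel arc and can be similarly pruned.

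I expect this discarding step to be the main obstacle: the reductions must be carried out without disturbing the component parallel to $\Sigma$, and one must verify that the usual innermost-disk/outermost-arc arguments interact correctly with the tangle. The negative spanning case then follows by reversing the direction of $f$, since the hypotheses of Theorem~\ref{thm:span} are symmetric under this reversal. For the case where $g$ spans $f$ both positively and negatively, one uses the openness of the ``mostly above'' and ``mostly below'' conditions in the square $[-1,1]\times[-1,1]$ to find a single value of $s$ together with three $t$-values realizing an above--below--above (or below--above--below) pattern, so that the second part of Theorem~\ref{thm:span} applies. This produces a compressed $\Sigma''$ with two components parallel to $\Sigma$, and the same component-by-component Euler characteristic bookkeeping then yields $\chi(\Sigma')\leq 2\chi(\Sigma)$.
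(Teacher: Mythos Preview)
Your approach is essentially the paper's: derive the inequality from Theorem~\ref{thm:span} via monotonicity of Euler characteristic under c-compression. A few remarks on the details.

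First, once-punctured spheres cannot arise. All components obtained from the closed surface $\Sigma'_s$ by c-compressions are themselves closed, so they meet the $1$--manifold $T$ transversely in an even number of points; hence no component has punctured Euler characteristic $+1$. The only potentially troublesome components are unpunctured spheres.

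Second, your ``discarding'' step does not work as written. You establish $\chi(\Sigma')\leq\chi(\Sigma'')$ and then want $\chi(\Sigma'')\leq\chi(\Sigma)$; throwing away a sphere component lowers $\chi(\Sigma'')$, which moves the inequality in the wrong direction. What you need is that no unpunctured sphere components occur in the first place. The paper handles this inside the proof of Theorem~\ref{thm:span}: after maximally c-compressing inside the product region one appeals to \cite[Corollary~3.7]{T1}, and then argues that unpunctured sphere components are excluded, so every component of the compressed surface already has non-positive Euler characteristic. You should invoke that conclusion rather than attempt to prune spheres after the fact.

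Third, for the ``both positively and negatively'' case you need a \emph{single} value of $s$ together with three ordered values $t_+>t_0>t_-$ realizing the above--below--above pattern, since that is the hypothesis of the second part of Theorem~\ref{thm:span}. Your appeal to openness of the mostly-above and mostly-below regions is the right idea, but it is not automatic from the bare definition (which a priori allows two different $s$-values); you should either justify it from the structure of these regions in the graphic or note that the paper treats it as part of the spanning hypothesis.
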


{\em Proof of Theorem \ref{thm:span}}
We will only prove the second statement as the proof of the first statement is similar but simpler. Suppose there are values $s$ and $t_+>t_0>t_-$ such that $\Sss_{t_+}$ and $\Sss_{t_-}$ are mostly above $\Sss'_s$ and $\Sss_{t_0}$ is mostly below $\Sss'_s$.

By definition, each loop in the intersection $\Sss'_s \cap (\Sssp \cup \Sss_{t_0} \cup \Sss_{t_-})$ bounds a disk or a once-punctured disk in $\Sssp \cup \Sss_{t_0} \cup \Sss_{t_-}$. To facilitate this discussion color ${H'_s}^-$ blue and ${H'_s}^+$ red. This induces a coloring on $\Sss_{t_+}$, $\Sss_{t_-}$ and $\Sss_{t_0}$. As $\Sss_{t_+}$ and $\Sss_{t_-}$ are mostly above $\Sss'_s$ every component of $(\Sss_{t_+} \cup \Sss_{t_-})-\Sss'_s$ that is not contained in a possibly once punctured sub-disk of $\Sss_{t_+}$ or $\Sss_{t_-}$ is red. Every component of $\Sss_{t_0}-\Sss'_s$ that is not contained in a possibly once punctured sub-disk of $\Sss_{t_0}$ is blue. 

Let $\ell$ be an innermost in $\Sssp$ curve of $\Sss'_s \cap \Sssp$. By hypothesis $\ell$ is necessarily inessential in $\Sssp$. If $\ell$ is also inessential in $\Sss'_s$ then it can be removed by an isotopy of $\Sss'_s$ as $M$ is irreducible and $T$ is prime. If $\ell$ is essential in $\Sigma'_s$ then the possibly punctured disk it bounds in $\Sssp$ is a c-disk for $\Sigma'_s$. Replace $\Sigma'_s$ with the surface $F_0$ that results from c-compressing $\Sigma'_s$ along this c-disk. Note that neither of these two moves affects the coloring of any region of $\Sssp-\Sss'_s$ that is not contained in a possibly punctured subdisk of $\Sssp$. We can then repeat this construction with an innermost loop of $F_0 \cap \Sssp$, producing a surface $F_1$ and so on until we find a surface $F_k$ disjoint from $\Sssp$. At the end of this sequence of isotopies and c-compressions, $\Sssp$ will be entirely red. 

Repeat the above process with $\Sss_{t_-}$ and $F_k$ playing the roles of $\Sssp$ and $\Sss'_s$ respectively to obtain a surface $F_\ell$ disjoint from both $\Sssp$ and $\Sss_{t_-}$ and leaving $\Sss_{t_-}$ entirely red. Finally repeat the process beginning with $\Sss_{t_0}$ and $F_\ell$ to obtain a surface $F_m$ disjoint from all of  $\Sssp \cup \Sss_{t_0} \cup \Sss_{t_-}$ and leaving $\Sss_{t_0}$ entirely blue. 

Maximally c-compress the surface $F_n=F_m \cap f^{-1}(t_-, t_+)$ in the complement of $\Sssp \cup \Sssz \cup \Sssm$ to get a surface $\tilde F$. Each component of $\tilde F$ is contained in a 3-manifold homeomorphic to $\Sss \times I$ and is c-incompressible in this manifold. By \cite[Corollary 3.7]{T1} each component of $\tilde F$ is either a sphere disjoint from $T$, a sphere bounding a ball containing a trivial subarc of $T$ or a component parallel to $\Sssz$. Note that $\tilde F$ was obtained from $\Sss'_s$ by c-compressions and therefore it cannot have sphere components disjoint from $T$ as $\Sss'_s$ does not have any such component, i.e., all component of $\tilde F$ have non-positive Euler characteristic. In addition $\tilde F$ separates $\Sssz$ from $\Sssp$ and  $\Sssz$ from $\Sssm$ as $\Sssz$ is entirely blue and $\Sssp$ and $\Sssm$ are red. Therefore $\tilde F$ must have at least two components parallel to $\Sssz$, one lying in the product region between $\Sssm$ and $\Sssz$ and one lying in the product region between $\Sssz$ and $\Sssp$. 
\qed

\section{Splitting sweep-outs and bounds on distance}\label{sec:split}

We briefly review the definition of distance of a bridge surface. For more details see \cite{T2}.

\begin{defin} \label{def:distance} Suppose $M$ is a compact, orientable, irreducible $3$--manifold containing a properly embedded tangle $T$ and suppose $(\Sss,(H^-, \tau^-), (H^+, \tau^+))$ is a bridge splitting for $(M,T)$.
The curve complex $\mathcal{C}(\Sigma,T)$ is
a graph with vertices corresponding to isotopy classes of
essential simple closed curves in $\Sigma-\eta(T)$. Two vertices are adjacent
in $\mathcal{C}(\Sigma,T)$ if their corresponding classes of curves
have disjoint representatives.

Let $\mathcal{V}^+$ (resp $\mathcal{V}^-$) be the set of all essential
 simple closed curves in $\Sigma-\eta(T)$ that bound disks in
 $H^+ -\eta(T)$ (resp $H^- -\eta(T)$). Then the {\em distance} of the bridge splitting, $d(\Sigma,T)$, is the minimum distance between a vertex in $\mathcal{V}^+$ and a vertex in $\mathcal{V}^-$ measured in
 $\mathcal{C}(\Sigma,T)$ with the path metric.
 \end{defin}

\begin{thm}\label{thm:split}
Let $f$ and $g$ be sweep-outs associated to bridge surfaces $\Sigma$ and $\Sigma'$ for a prime tangle $T$ in an irreducible 3-manifold $M$ and suppose that $\chi(\Sigma)\leq -1$. If $f \times g$ is generic and $g$ splits $f$ then $d(\Sss,T) \leq 2-\chi(\Sss')$.
\end{thm}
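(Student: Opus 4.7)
The plan is to use the splitting hypothesis to build a path in the curve complex $\mathcal{C}(\Sigma, T)$ from a vertex of $\mathcal{V}^-$ to a vertex of $\mathcal{V}^+$ whose length is at most $2-\chi(\Sigma')$, which by definition of distance forces $d(\Sigma, T) \le 2-\chi(\Sigma')$. First I would fix $s \in (-1,1)$ realizing the splitting condition. By hypothesis, for every generic $t \in (-1,1)$ the intersection $\Sigma_t \cap \Sigma'_s$ contains at least one simple closed curve essential in $\Sigma_t$; let $C(t)$ denote the nonempty set of isotopy classes in $\Sigma \cong \Sigma_t$ represented by such curves.

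I would then pin down the endpoints of the walk. For $t$ close to $-1$ the compression body $H^-_t$ collapses onto $\mathrm{spine}(H^-, \tau^-)$, and $\Sigma'_s \cap H^-_t$ is a disjoint union of small disks and once-punctured disks in neighborhoods of the finitely many points of $\Sigma'_s \cap \mathrm{spine}(H^-, \tau^-)$. The boundary circles of these on $\Sigma_t$ are either meridians of graph edges of the spine (essential in $\Sigma_t$ and bounding compressing disks in $H^-$) or loops parallel to a puncture (inessential), so $C(t) \subseteq \mathcal{V}^-$. A symmetric argument near $t = +1$ gives $C(t) \subseteq \mathcal{V}^+$; the hypothesis $\chi(\Sigma) \le -1$ is used here to ensure that essential meridians on $\Sigma$ actually exist.

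Next I would control consecutive values of $t$. Between adjacent crossings of the horizontal arc $[-1,1] \times \{s\}$ with the graphic, the intersection $\Sigma_t \cap \Sigma'_s$ is isotopic in $\Sigma_t$ and $C(t)$ is constant. At a single tangency crossing the intersection changes by a saddle, and flowing pre- and post-saddle curves along $f$ to a common level set presents them as the boundary components of a (possibly once-punctured) pair of pants in $\Sigma_t$. In particular any choice from $C(t^-)$ and any choice from $C(t^+)$ can be made disjoint on $\Sigma$ and hence lie at distance at most $1$ in $\mathcal{C}(\Sigma, T)$; the same disjointness persists across puncture crossings. A choice $c_t \in C(t)$ therefore yields a walk in $\mathcal{C}(\Sigma, T)$ from a vertex of $\mathcal{V}^-$ to a vertex of $\mathcal{V}^+$ whose length is bounded by the number of crossings at which $C(t)$ genuinely changes.

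The main obstacle is to cap this number of ``essential'' crossings by $2 - \chi(\Sigma')$. The plan is to track the subsurface $a(t) = \Sigma'_s \cap H^-_t$ of $\Sigma'_s$ together with $b(t) = \Sigma'_s \cap H^+_t$: a tangency crossing attaches or removes a $1$-handle along $\partial a(t)$ (changing $\chi(a(t))$ by $\pm 1$), a puncture crossing transfers a puncture between $a(t)$ and $b(t)$, and a crossing that only modifies components of $\partial a(t)$ inessential in $\Sigma_t$ does not alter $C(t)$. An essential crossing, by contrast, creates, destroys, or merges an essential boundary component and thereby consumes at least one unit of the total c-complexity held in the essential parts of $a(t)$ and $b(t)$, which is fixed at $\chi(\Sigma') = \chi(a(t)) + \chi(b(t))$. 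The technical heart of the argument is the precise bookkeeping, paralleling \cite{JJ} but incorporating the c-disk framework of \cite{T2} so that punctures of $T$ are handled on the same footing as compressions, which caps the number of essential crossings at $2 - \chi(\Sigma')$ and thus yields $d(\Sigma, T) \le 2 - \chi(\Sigma')$.
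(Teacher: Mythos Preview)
Your overall strategy matches the paper's: fix a splitting level $s$, observe that near $t=\pm 1$ the intersection curves land in $\mathcal V^\mp$, and build a path in $\mathcal C(\Sigma,T)$ whose length you bound by $2-\chi(\Sigma')$. The endpoint analysis and the ``adjacent regular values give disjoint curves'' step are the same as in the paper.

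The gap is in your final paragraph. The accounting device you propose --- tracking $a(t)=\Sigma'_s\cap H^-_t$ and $b(t)=\Sigma'_s\cap H^+_t$ and claiming each essential crossing ``consumes at least one unit of the total c-complexity, which is fixed at $\chi(\Sigma')=\chi(a(t))+\chi(b(t))$'' --- does not work as stated: that total is constant in $t$, so nothing is being consumed. More concretely, the number of saddles of $f|_{\Sigma'_s}$ can exceed $-\chi(\Sigma')$ by an arbitrary amount (balanced by extrema), and an ``essential crossing'' in your sense can involve curves that bound disks in $\Sigma'_s$, so there is no evident injection from essential crossings into pieces of $\Sigma'_s$ with Euler characteristic $-1$.

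The paper's proof fixes exactly this. It first isolates a maximal interval $[\alpha,\beta]$ of $t$-values on which no curve of $\Sigma'_s\cap\Sigma_t$ is simultaneously essential in $\Sigma_t$ and bounds a disk in $\Sigma'_s$ (if no such interval exists one gets $d(\Sigma,T)\le 1$ directly). After isotoping away curves inessential in both surfaces, it takes the subsurface $S=\Sigma''\cap f^{-1}[\alpha',\beta']$; because no boundary curve of $S$ bounds a disk in $\Sigma''$, one has $\chi(S)\ge\chi(\Sigma')$. Now the level sets of $f|_S$ genuinely cut $S$ into pairs of pants and once-punctured annuli, each contributing $-1$ to $\chi(S)$, so the projected curve set has diameter at most $-\chi(S)$ in $\mathcal C(\Sigma,T)$. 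Adding $1$ at each end for the step across $\alpha$ and across $\beta$ gives $d(\Sigma,T)\le 2-\chi(S)\le 2-\chi(\Sigma')$. The restriction to $[\alpha,\beta]$ is precisely what lets the Euler-characteristic count go through, and is the ingredient missing from your bookkeeping.
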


\begin{proof}
Let $s$ be such that for every $t\in(-1,1)$ the intersection $\Sigma'_s \cap \Sss_t$ contains a curve that is essential in $\Sss_t$ and $[-1,1]\times \{s\}$ is disjoint from all vertices of $\Gamma$ so in particular $g|_{\Sss_t}$ is Morse. Let $H_t^-=f^{-1}[-1,t)$ and  $H_t^+=f^{-1}(t,1]$ be the two components of $M-\Sigma_t$. For values of $t$ close to $-1$, all curves of $\Sss'_s \cap \Sss_t$ bound disks in $H_t^-$ because $\Sss'_s$ is transverse to the spine $f^{-1}(-1)$. Similarly for values of $t$ close to $1$, all curves of $\Sss'_s \cap \Sss_t$ bound disks in $H_t^+$. 

If for some value $t$ there is a curve of $\Sss' \cap \Sss_t$ that is essential in $\Sss_t$ and bounds a disk in $H_t^+$ and simultaneously there is a curve of $\Sss' \cap \Sss_t$ that is essential in $\Sss_t$ and bounds a disk in $H_t^-$, then $d(\Sss,T) \leq 1$. 

We will say that two values $t_-$, $t_+$ are \textit{adjacent} if there is a single critical value for $f$ between them.  In this case, the projections of the curves $\Sss' \cap \Sss_{t_-}$ to $\Sss_{t_+}$ can be isotoped disjoint from the curves $\Sss' \cap \Sss_{t_+}$.  Thus, if for some adjacent values $t_-$, $t_+$, there is a curve of $\Sss' \cap \Sss_{t_-}$ that is essential in $\Sss_{t_-}$ and bounds a disk in $H_{t_-}^-$ and a curve of $\Sss' \cap \Sss_{t_+}$ that is essential in $\Sss_{t_+}$ and bounds a disk in $H_{t_+}^+$, we can again conclude that $d(\Sss,T) \leq 1$. 

The above discussion shows that either $d(\Sss,T) \leq 1\leq 2-\chi(\Sigma')$ or there is an interval $[\alpha,\beta]$, where $\alpha \neq \beta$ are critical values for $f|_{\Sss'_s}$ such that for every $t \in (\alpha,\beta)$, no curve of $\Sss'_s \cap \Sss_t$ is both essential in $\Sss_t$ and bounds a disk in  $\Sss'_s$. Moreover for a very small $\epsilon$, $\Sss'_s \cap \Sigma_{\alpha-\epsilon}$ contains  curve that is essential in $\Sigma_{\alpha-\epsilon}$ and bounds a disk in $H^-_{\alpha-\epsilon}$ and $\Sss'_s \cap \Sigma_{\beta+\epsilon}$ contains a curve that is essential in $\Sigma_{\beta+\epsilon}$ and bounds a disk in $H^+_{{\beta+\epsilon}}$. 

Let $\alpha'$ be just above $\alpha$ and $\beta'$ be just below $\beta$. Suppose some component of $\Sss'_s \cap \Sss_{\alpha'}$ bounds a disk in $\Sss'_s$. Then this component must also bound a disk in $\Sss_{\alpha'}$ and therefore $\Sss'_s$ can be isotoped to remove this component. After some number of isotopies we obtains a surface $\Sss''$ so that no curve of $\Sss'' \cap \Sss_{\alpha'}$ or $\Sss'' \cap \Sss_{\beta'}$ bounds a disk in $\Sss''$.
Define $S = \Sss'' \cap f^{-1}[\alpha',\beta']$.  Because the boundary curves of $S$ do not bound disks in $\Sss''$, it follows that $\chi(S) \geq \chi(\Sss'_s)$. Let $\pi$ be the projection map from $f^{-1}[\alpha', \beta']$ to $\Sss_0$. By \cite[Lemma 22]{JJ}, isotopy classes of loops in $S$ project to isotopy classes in $\Sss_0$. Although we are now dealing with punctured surfaces the proof of this result is the same so we will not repeat it here.

As in \cite{JJ} we let $L$ be the set of isotopy classes of loops of $f|_S$ and let  $\pi^*$ be the natural map from $L$ to $\mathcal{C}(\Sigma_0,T)$, together with $\{0\}$ where each curve in $L$ maps to the vertex that corresponds to its projection in $\Sss_0$ unless it is inessential in $\Sss_0$ in which case it is mapped to $\{0\}$. 

Note that $L$ determines a decomposition of $S$ into pairs of pants and punctured annuli. Lemma 23 in \cite{JJ} shows that if $\ell$ and $\ell'$ are cuffs of the same pair of pants, then their images under $\pi^*$ are adjacent vertices in $\mathcal{C}(\Sss_0,T)$. The same is true if $\ell$ and $\ell'$ are the two boundary components of a punctured annulus. For if that is the case, then $f|_{\Sigma'}$ passes through a puncture so it contains a level component which is an arc with both of its endpoints lying in a boundary component of $\Sigma'$. The projection of this component to $\Sigma_0$ is also an arc with both endpoints on some boundary component. The boundary curves of a regular neighborhood of the arc together with the boundary component are isotopic to the projections of $\ell$ and $\ell'$ and thus $\ell$ and $\ell'$ are disjoint.

Let $L'=\pi_*(L)\cap \mathcal{C}(\Sss_0,T)$. By \cite[Lemma 24]{JJ} this set is connected and has diameter equal to at most the number of components of $S-L$. Each component of $S-L$ is a punctured annulus or a pair of pants and therefore contributes $-1$ to $\chi(S)$. It follows that $diam(L')\leq   -\chi(S)$.

Recall that for a very small $\epsilon$, $\Sss'\cap \Sigma_{\alpha-\epsilon}$ contains a curve that bounds a compressing disk for $H_{\alpha-\epsilon}^-$ and $\Sss'\cap \Sigma_{\beta+\epsilon}$
contains a curve that bounds a compressing disk for $H_{\beta+\epsilon}^+$. As the intervals $(\alpha-\epsilon,\alpha')$ and $(\beta', \beta+\epsilon)$ contain exactly one critical point each, every curve in the set $\pi( \Sss'\cap \Sigma_{\alpha-\epsilon})$ is distance at most one from every curve in the set $\pi( \Sss'\cap \Sigma_{\alpha'})$ and similarly every curve in the set $\pi( \Sss'\cap \Sigma_{\beta+\epsilon})$ is distance at most one from every curve in the set $\pi( \Sss'\cap \Sigma_{\beta'})$. Adding these distances we obtain the inequality $d(\Sigma)\leq diam(L')+2\leq 2-\chi(S)\leq 2-\chi(\Sigma')$ as desired. 
\end{proof}

In this and in the previous section we saw that if $f$ and $g$ are two sweep-outs associated to bridge surfaces $\Sigma$ and $\Sigma'$ for the pair $(M,T)$ and $g$ spans $f$, then we can relate $\chi(\Sigma)$ and $\chi(\Sigma')$ and if $g$ splits $f$ then we can relate $d(\Sss,T)$ and $\chi(\Sss')$. It is clear that if $g$ and $f$ are sweep-outs such that $f \times g$ is generic, then either $g$ spans $f$, $g$ splits $f$ or there is are values of $s$ and $t$ such that for a small $\epsilon$, $\Sss_{t}$ is mostly above $\Sss'_{s-\epsilon}$ and $\Sss_{t}$ is mostly below $\Sss'_{s+\epsilon}$. We now consider a slight generalization of this last case.

\begin{lemma}\label{lem:other}
Suppose $f$ and $g$ are sweep-outs for a tangle in a manifold such that $f \times g$ is generic except possibly for a single vertex of order 6 or two vertices of order 4 with the same $s$ coordinate. Suppose the graphic of $f \times g$ has a vertex at coordinates $(s,t)$ such that for a small $\epsilon$, $\Sss_{t}$ is mostly above $\Sss'_{s-\epsilon}$ and $\Sss_{t}$ is mostly below $\Sss'_{s+\epsilon}$. If this vertex has valence 4, then $\chi(\Sss)\geq -2$. If the vertex has valence 6, then $\chi(\Sss)\geq -3$.
\end{lemma}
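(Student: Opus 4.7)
My plan is to obtain the bound on $\chi(\Sss)$ from an Euler-characteristic identity together with Morse-theoretic control of the events at the vertex $(s,t)$. First I would write $s_\pm := s \pm \epsilon$, and set $B(r) := \Sss_t \cap H'^-_r$ and $A(r) := \Sss_t \cap H'^+_r$ for $r \in [s_-, s_+]$. The hypotheses say that each component of $B(s_-)$ and of $A(s_+)$ is a planar subsurface of $\Sss_t$ contained in a disk or once-punctured disk. Since $s_- < s_+$, the sets $B(s_-)$ and $A(s_+)$ are disjoint; together with the cobordism $P := \Sss_t \cap g^{-1}[s_-, s_+]$ they partition $\Sss_t$ (sharing the boundary curves $\gamma_\pm := \Sss_t \cap \Sss'_{s_\pm}$), yielding
\[
\chi(\Sss_t) \;=\; \chi(B(s_-)) + \chi(P) + \chi(A(s_+)).
\]

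The first step is a Morse-theoretic bound $|\chi(P)| \leq k$, where $k$ is the number of events at the vertex ($k=2$ for valence four and $k=3$ for valence six). I would observe that each tangency between $\Sss'_r$ and $\Sss_t$ for $r \in (s_-, s_+)$ is an interior critical point of $g|_{\Sss_t}$ contributing $(-1)^{\mathrm{index}}$ to $\chi(P)$, and each puncture-intersection event is a tangency between $\Sss'_r$ and a puncture circle of $\partial \Sss_t$ that similarly contributes $\pm 1$ via a boundary Morse analysis. Summing yields the bound.

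The second, more delicate step is to establish $\chi(B(s_-)) + \chi(A(s_+)) \geq 0$ via a nested-disk matching. Each component $R$ of $B(s_-)$ is a planar subsurface of some containing disk $D \subset \Sss_t$; writing $k_R$ for the number of boundary curves of $R$ in $\gamma_-$, one has $\chi(R) = 2 - k_R$ (ignoring punctures for brevity), and $k_R - 1$ of those boundaries are ``inner'' curves, each bounding a sub-disk of $D$ occupied by $A(s_-)$. Each such inner sub-disk contributes $+1$ to the Euler characteristic on the opposite side and survives into $A(s_+)$ unless some event at $(s,t)$ is located inside it. Since at most $k$ events can disturb these nested sub-disks (each event affecting at most one), the positive contributions on the $s_+$-side outweigh the negative contributions on the $s_-$-side, yielding the claimed non-negativity.

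Combining the two steps gives $\chi(\Sss_t) \geq 0 - k = -k$, so $\chi(\Sss) \geq -2$ in the valence-four case and $\chi(\Sss) \geq -3$ in the valence-six case. The principal obstacle will be making the nested-disk matching rigorous: one must carefully treat the once-punctured variant (where a component picks up a boundary circle from $\partial \Sss_t$), track how each type of event---Morse minimum, maximum, saddle, or puncture crossing---affects the inner sub-disks, and book-keep so that every surviving inner disk on the $s_+$-side really does compensate the negative Euler-characteristic contributions from topologically complicated planar components on the $s_-$-side.
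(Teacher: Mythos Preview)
Your decomposition $\chi(\Sigma_t)=\chi(B(s_-))+\chi(P)+\chi(A(s_+))$ is correct, and the Morse bound $\chi(P)\geq -k$ is unproblematic. The difficulty is entirely in your Step~2, and the nested-disk matching you sketch does not close the gap. Your matching pairs each inner boundary of a component $R\subset B(s_-)$ with a sub-disk sitting in $A(s_-)$ and argues that it survives into $A(s_+)$ unless an event lands inside it. Even granting that, this only offsets the negative contributions coming from $B(s_-)$; it says nothing about the negative Euler characteristic that $A(s_+)$ can carry on its own (a component of $A(s_+)$ can be a many-holed planar surface inside a disk). The symmetric matching you would need --- inner sub-disks of $A(s_+)$ surviving down into $B(s_-)$ --- is not automatic, because those sub-disks lie in $B(s_+)$, not in $B(s_-)$. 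If you then invoke the events again to control how many of those fail to descend, you are counting the same $k$ events twice: once in the bound $\chi(P)\geq -k$ and once in the matching. Carried through honestly, your scheme yields only $\chi(\Sigma_t)\geq -2k$, which is too weak by a factor of two.

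The paper avoids this by not separating the estimate into two independent pieces. It works at a nearby regular value $t'$ where $g|_{\Sigma_{t'}}$ is genuinely Morse, takes the outermost curves $\Lambda$ at the lower level so that $\Sigma_{t'}\setminus\Lambda$ consists of a single ``large'' piece together with (at most once-punctured) disks, and then observes that passing the $\leq k$ critical levels amounts to attaching $\leq k$ bands to this picture. Because at the upper level the roles are reversed (the large piece is now on the $B$-side and only disks are missing), one gets directly that $\Sigma_{t'}$ minus some non-negative-$\chi$ pieces is built from non-negative-$\chi$ pieces by $\leq k$ bands, hence $\chi(\Sigma_{t'})\geq -k$. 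The point is that the band count does all the work in one stroke; splitting it into $\chi(P)$ and a separate disk-matching inequality loses exactly the information needed to avoid the factor of two.
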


\begin{proof}
By the definition of $f \times g$ it follows that, $g|_{\Sss_{t'}}$ is Morse where $t'=t+\epsilon'$ for a small $\epsilon'$. Furthermore  there are two critical values for $g|_{\Sss_{t'}}$, $a < b$ with at most one other critical value between them (if the valence of $(s,t)$ is 6) such that if $a'$ is a regular value directly below $a$ and $b'$ is a regular value directly above $b$, then $\Sss_{t'}$ is mostly above $\Sss'_{a'}$ and mostly below $\Sss'_{b'}$. 

Consider first $\Sss_{t'} \cap \Sss'_{a'}$. By definition each component of $\Sss_{t'} \cap H'^-_{a'}$ is contained in a possibly punctured disk subset of $\Sss_{t'}$. Let $\Lambda$ be the set of all curves of $\Sss_{t'} \cap \Sss'_{a'}$ that are not contained in the interior of a disk or punctured disk component of $\Sss_{t'} - \Sss'_{a'}$, see Figure \ref{fig:curves}. Then $\Sss_{t'}-\Lambda$ is a collection of components all but one of which are possibly punctured disks. Note that the Euler characteristics of each of these possibly punctured disk components is at least 0. 

Passing through each critical point between $a'$ and $b'$ is equivalent to adding a band between two components of $\Sss_{t'}-\Sss'_{a'}$ or banding a component to itself. In either case the sum of the Euler characteristics of all components is decreased by one. As these bands correspond to a sweep-out they all lie on the same side of $\Sss'$. As $\Sss_{t'}$ is mostly below $\Sss'_{b'}$, it follows that after attaching at most three bands to a collection of at most once punctured disks, the result is a surface isotopic to $\Sss_{t'}$ with possibly some disks and once punctured disks missing, i.e. $\Sss_{t'}\cap \Sss'_{b'}$ is also as in Figure \ref{fig:curves} but now the subsurface which is not contained in a punctured disk is below $\Sss'_{b'}$. As at most three bands were added, it follows that $\chi(\Sss_{t'})\geq -3$. If the vertex $(s,t)$ has valence four, then only two bands need to be added so $\chi(\Sss_{t'})\geq -2$.

\begin{figure}
\begin{center}
\includegraphics[scale=.35]{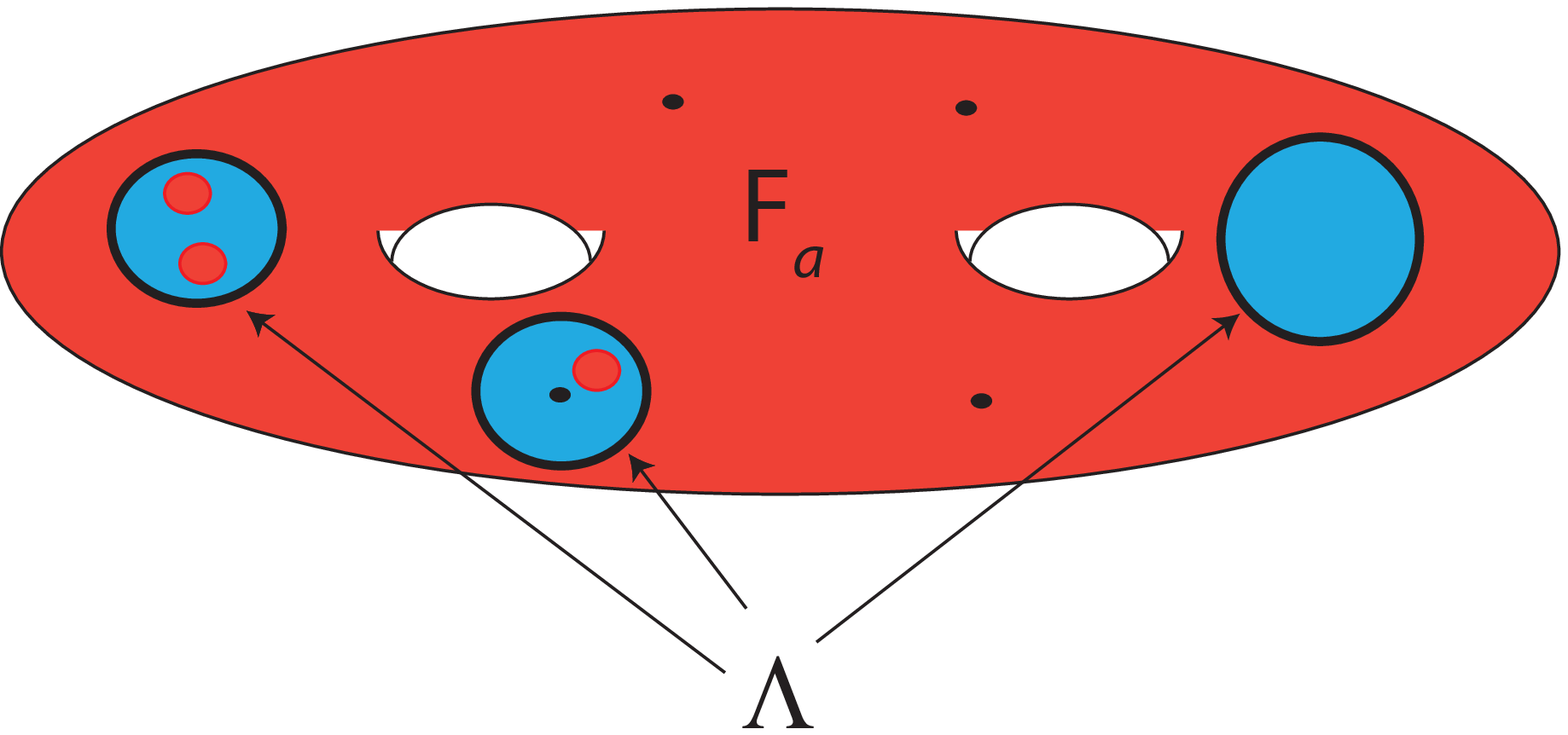}
\caption{}
\label{fig:curves}
\end{center}
\end{figure}

\end{proof}

Using the results in this and the previous section we can obtain the following generalization of the main result in \cite{T2}.

 \begin{thm}\label{thm:boundbridge}
Suppose $N$ is a manifold containing a tangle $K$ and let $M$ be submanifold such that $T=K \cap M$ is a properly embedded tangle. Let $\Sigma$ be a bridge surface of $(M,T)$ and let $\Sigma'$ be a bridge surface of $(N,K)$. Then one of the following holds:

\begin{itemize}
\item There is an isotopy to $\Sss'$ followed by some number of compressions and cut-compressions of $\Sigma'\cap M$ in $M$ giving a compressed surface $\Sigma''$, such that at least one component of $\Sigma'' \cap M$ is parallel to $\Sigma$,
\item $d(\Sigma,T) \leq 2-\chi(\Sigma')$,
\item $\chi(\Sigma)\geq -2 $.
\end{itemize}
\end{thm}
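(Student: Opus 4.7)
The plan is to run a sweep-out argument in $f \times g$, where $f$ is a sweep-out of $M$ representing the bridge splitting with bridge surface $\Sigma$ and $g$ is a sweep-out of $N$ representing the bridge splitting with bridge surface $\Sigma'$, and then to apply the trichotomy developed in Sections \ref{sec:span} and \ref{sec:split}. After a small generic perturbation every vertex of the graphic $\Gamma$ has valence four, and the observation made immediately before Lemma \ref{lem:other} tells us that exactly one of three possibilities must occur: $g$ spans $f$, $g$ splits $f$, or $\Gamma$ contains a ``flip'' vertex $(s,t)$ across which $\Sigma_t$ transitions from mostly above $\Sigma'_{s-\epsilon}$ to mostly below $\Sigma'_{s+\epsilon}$ (or vice versa). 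I would then handle each case separately to extract one of the three bullets of the statement.

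If $g$ spans $f$, I would apply Theorem \ref{thm:span} to produce a sequence of isotopies and c-compressions of $\Sigma'_s$ yielding a component parallel to $\Sigma$. The key point is that the c-disks appearing in that argument are innermost possibly punctured disks cut off from $\Sigma_{t_\pm}$ and $\Sigma_{t_0}$ by curves of $\Sigma_{t_\pm} \cap \Sigma'_s$ and $\Sigma_{t_0} \cap \Sigma'_s$, and hence lie in $M$; therefore the whole argument restricts to operations on $\Sigma' \cap M$ performed in $M$, giving the first bullet. If $g$ splits $f$ and $\chi(\Sigma) \leq -1$, then Theorem \ref{thm:split} directly supplies the distance bound $d(\Sigma,T) \leq 2 - \chi(\Sigma')$ of the second bullet (and if $\chi(\Sigma) \geq 0$ then the third bullet is automatic). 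In the remaining case, Lemma \ref{lem:other} applied at the flip vertex, which has valence four because $f \times g$ is generic, yields $\chi(\Sigma) \geq -2$, which is the third bullet.

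The main obstacle is the bookkeeping in the spanning case: verifying that the entire chain of innermost-disk isotopies and c-compressions in the proof of Theorem \ref{thm:span} stays inside $M$ so that the resulting compressed surface is genuinely $\Sigma'' \cap M$ with a $\Sigma$-parallel component in $M$, rather than in $N$. A secondary bookkeeping point is that Theorems \ref{thm:span} and \ref{thm:split} are stated under primality and irreducibility hypotheses on $(M,T)$, which I would either carry as hypotheses of Theorem \ref{thm:boundbridge} or verify hold locally near $\Sigma$; no additional hypotheses on $(N,K)$ are needed since $g$ is only used to produce the graphic and the level surfaces of $\Sigma'$.
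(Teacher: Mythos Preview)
Your proposal is correct and follows essentially the same approach as the paper: the paper's proof also invokes the trichotomy (span/split/flip-vertex), handles the spanning case via the arguments of Theorem~\ref{thm:span} applied to $\Sigma'_s\cap M$ in $M$, the splitting case via Theorem~\ref{thm:split}, and the remaining valence-four vertex case via Lemma~\ref{lem:other}. Your remarks about the c-disks lying in $M$ and about the primality/irreducibility hypotheses are apt bookkeeping observations that the paper leaves implicit.
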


\begin{proof}
Because $(M,T) \subset (N,K)$ for values of $s$ close to $-1$, $\Sigma_t$ is mostly above $\Sigma'_s$ and for values of $s$ close to $1$, $\Sigma_t$ is mostly below $\Sigma'_s$. Therefore there are three possibilities. Either $g$ spans $f$, $g$ splits $f$ or the graphic of $f \times g$ has a vertex of valence 4 at coordinates $(s,t)$ such that for a small $\epsilon$, $\Sss_{t}$ is mostly above $\Sss'_{s-\epsilon}$ and $\Sss_{t}$ is mostly below $\Sss'_{s+\epsilon}$. Thus there are three cases to consider.

\textbf{Case 1:} If $g$ spans $f$ then there are values $s$ and $t_+$ and $t_-$ such that $\Sss_{t_+}$ is mostly above $\Sss'_s$ and $\Sss_{t_-}$ is mostly below $\Sss'_s$. By the arguments in Theorem \ref{thm:span} it follows that after some number of compressions and cut-compressions of $\Sigma_s'\cap M$ in $M$ we obtain an incompressible surface $\Sigma''$ that separates $\Sss_{t_+}$ and $\Sss_{t_-}$ and therefore it is parallel to $\Sigma$ as desired. 

\textbf{Case 2:} If $g$ splits $f$, then the arguments in the proof of Theorem \ref{thm:split} show that 
 $d(\Sigma,T)\leq 2-\chi(\Sigma')$ as desired. 

\textbf{Case 3:} Finally suppose that the graphic of $f \times g$ has a vertex at coordinates $(s,t)$ such that for a small $\epsilon$, $\Sss_{t}$ is mostly above $\Sss'_{s-\epsilon}$ and $\Sss_{t}$ is mostly below $\Sss'_{s+\epsilon}$. Then by Lemma \ref{lem:other} it follows that $\chi(\Sigma)\geq -2$.

\end{proof}

\section{Flipping bridge surfaces}\label{sec:flipping}

In this section we want to restrict our attention to oriented isotopies, i.e., if $\Sss$ and $\Sss'$ are bridge splittings for $(M,T)$ splitting the manifold into compression bodies $H^+, H^-$ and $H'^+, H'^-$ respectively, the bridge splittings $(\Sss, (H^+, \tau^+), (H^-, \tau^-))$ and $(\Sss', (H'^+, \tau'^+), (H'^-, \tau'^-))$ will be called {\em orientation isotopic} if there is an isotopy mapping $\Sss$ to $\Sss'$, $(H^+, \tau^+)$ to $(H'^+, \tau'^+)$ and $(H^-, \tau^-)$ to $(H'^-, \tau'^-)$. Following \cite{JJ} we will say that a bridge surface $\Sss$ is {\em flippable} if $(\Sss, (H^+, \tau^+), (H^-, \tau^-))$ is orientation isotopic to $(\Sss, (H^-, \tau^-), (H^+, \tau^+))$. 

Suppose $(\Sss', (H'^+, \tau'^+), (H'^-, \tau'^-))$ is a bridge splitting for $(M,T)$ isotopic to stabilizations and perturbations of both bridge splittings $(\Sss, (H^+, \tau^+), (H^-, \tau^-))$ and $(\Sss, (H^-, \tau^-), (H^+, \tau^+))$. The minimal value of $2-\chi(\Sss')$ is called the {\em flip Euler characteristic of $\Sss$} and it is analogous to the flip genus of a Heegaard splitting defined in \cite{JJ}. 

We will take advantage of several results previously proven for sweep-outs of Heegaard splittings. The proofs carry over with only minor modifications.

\begin{lemma} \label{lem:span}  (See Lemma 9 and Lemma 11 in \cite{JJ})
If $(\Sigma,(H^+,\tau^+), (H^-,\tau^-))$ is a bridge decomposition for some knot $K \subset M$ then $(\Sigma,(H^+,\tau^+), (H^-,\tau^-))$ spans itself positively. If $(\Sigma',(H'^+,\tau'^+),(H'^-,\tau'^-))$ is a perturbation or stabilization of $(\Sigma,(H^+,\tau^+),(H^-,\tau^-))$, then it spans the bridge splittings $(\Sigma, (H^+,\tau^+),(H^-,\tau^-))$ positively and spans $(\Sigma, (H^-,\tau^-), (H^+,\tau^+))$ negatively.
\end{lemma}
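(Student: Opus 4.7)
The plan is to produce explicit witnesses for positive spanning and then derive the negative-spanning claim by a symmetry argument about reversing the parameter of $f$. Throughout, the key observation is that the ``mostly above/below'' conditions depend only on $g$ (via $H'^\pm_s$) and not on the parameter orientation of $f$.

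For the first assertion, I would let $g$ be a small generic perturbation of $f$ so that $f \times g$ is generic. If $\Sigma = f^{-1}(0)$ and we fix $s = 0$, then for any $t_+$ on one side of $0$ the level $\Sigma_{t_+}$ lies (up to the small perturbation) inside $H'^+_0$; the intersection $\Sigma_{t_+} \cap H'^-_0$ is either empty or a collection of small disks coming from the perturbation, each contained in a disk of $\Sigma_{t_+}$. Hence $\Sigma_{t_+}$ is mostly above $\Sigma'_0$, and symmetrically $\Sigma_{t_-}$ is mostly below $\Sigma'_0$ for any $t_-$ on the opposite side. Choosing $t_-$ and $t_+$ so that $t_- < t_+$ witnesses positive spanning.

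For the second assertion, suppose $(\Sigma', (H'^+, \tau'^+), (H'^-, \tau'^-))$ is obtained from $(\Sigma, (H^+, \tau^+), (H^-, \tau^-))$ by a single stabilization or perturbation. A spine of $(H'^+, \tau'^+)$ can be obtained from a spine of $(H^+, \tau^+)$ by a purely local modification: attaching an extra edge disjoint from the tangle in the stabilization case, or attaching an extra arc crossing the tangle exactly once in the perturbation case; the analogous statement holds on the $-$ side. I would build a sweep-out $g$ for $\Sigma'$ whose $\pm 1$-levels are these enlarged spines and which agrees with a reparametrization of $f$ outside a small ball containing the modification. Fixing $s = 0$ and letting $t_+$ be close to the end of $f$'s domain where the spine of $(H^+, \tau^+)$ sits, the level $\Sigma_{t_+}$ is a thin neighborhood of that spine and lies inside $H'^+_0$ except in the small ball, where $\Sigma_{t_+} \cap H'^-_0$ is confined to a disk (stabilization) or once-punctured disk (perturbation) subset of $\Sigma_{t_+}$. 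Either case is admitted by the definition, so $\Sigma_{t_+}$ is mostly above $\Sigma'_0$, and a symmetric argument gives that $\Sigma_{t_-}$ is mostly below for $t_-$ close to the opposite end. Positive spanning follows.

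Finally, the negative-spanning claim is an immediate consequence of the symmetry observation from the first paragraph: reversing the orientation of the bridge splitting replaces $f$ by $-f$, which substitutes $t \mapsto -t$ and reverses the inequality $t_- < t_+$, thereby converting positive spanning into negative. The main obstacle will be carefully constructing the sweep-out $g$ in the second assertion with the prescribed local structure at $s = \pm 1$ so that $\Sigma_{t_\pm}$ meets the opposite compression body of $g$ only inside (possibly punctured) disks; this construction is carried out by Johnson in the Heegaard setting as Lemmas 9 and 11 of \cite{JJ}, and the only new ingredient here is observing that the perturbation case produces a once-punctured disk of intersection rather than an honest disk, which remains admissible under the definition of mostly above/below.
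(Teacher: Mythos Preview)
Your proposal is correct and follows essentially the same strategy as the paper, but the paper's execution is a bit cleaner and worth noting. Instead of taking $g$ to be a small perturbation of $f$ and then analyzing the residual disk or once-punctured disk intersections, the paper simply chooses $g$ so that the middle level $g^{-1}(0)$ is disjoint from, and separates, the two spines $f^{-1}(\pm 1)$; for the second assertion it performs the stabilization or perturbation on $g^{-1}(0)$ while keeping it disjoint from those spines, and then extends to a sweep-out. With this arrangement, for $t_\pm$ near $\pm 1$ the level $\Sigma_{t_\pm}$ is a thin neighborhood of the corresponding spine and is therefore \emph{disjoint} from $\Sigma'_0$, hence lies entirely in one of $H'^\pm_0$ and is vacuously mostly above/below. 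This avoids the local analysis you carry out in the small ball and makes parts one and two uniform. Your symmetry argument deducing negative spanning of the flipped splitting from positive spanning of the original (via $f\mapsto -f$) is exactly what the paper leaves implicit in its final sentence.
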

\begin{proof}

Let $f$ be a sweep-out for the decomposition $(\Sigma,(H^+,\tau^+), (H^-,\tau^-))$. Then there is a second sweep-out $g$ for $(\Sigma, (H^+,\tau^+),(H^-,\tau^-))$ such that $g^{-1}(0)=\Sigma'_0$ is disjoint from and separates the spines $f^{-1}(-1)$, $f^{-1}(1)$.  Thus for $t_-$ near $-1$ and $t_+$ near $1$, $\Sigma_{t_-}$ will be mostly below $\Sigma'_0$ and $\Sigma_{t_+}$ will be mostly above.  This implies that $(\Sigma,(H^+,\tau^+), (H^-,\tau^-))$ spans itself positively. 

Similarly, for any perturbation or stabilization of the bridge decomposition, we can perturb or stabilize $\Sigma'_0$ while keeping it disjoint from $f^{-1}(-1)$, $f^{-1}(1)$, then extend this surface to a sweep-out for the perturbed or stabilized bridge decomposition.  Thus if $(\Sigma',(H'^+,\tau'^+),(H'^-,\tau'^-))$ is a perturbation or stabilization of $(\Sigma,(H^+,\tau^+),(H^-,\tau^-))$, then it spans the bridge splitting $(\Sigma, (H^+,\tau^+),(H^-,\tau^-))$ positively and spans $(\Sigma, (H^-,\tau^-), (H^+,\tau^+))$ negatively.

\end{proof}
Let $\Sss'$ be a common stabilization or perturbation of the two bridge splittings $(\Sss,(H^-, \tau^-), (H^+, \tau^+))$ and $(\Sss,(H^+, \tau^+), (H^-, \tau^-))$. In particular there are sweep-outs $g$ and $g'$ representing $(\Sigma',(H'^-, \tau'^-), (H'^+, \tau'^+))$, and a sweep out $f$ representing  $(\Sss,(H^-, \tau^-), (H^+, \tau^+))$ such that $g$ spans $f$ positively and $g'$ spans $f$ negatively. As $g$ and $g'$ represent the same bridge decomposition and are therefore orientation isotopic it follows that there is a family of sweep-outs $\{g_r|r \in [0,1]\}$ such that $g_0=g$, $g_1=g'$.

\begin{figure}
\begin{center}
\includegraphics[scale=.5]{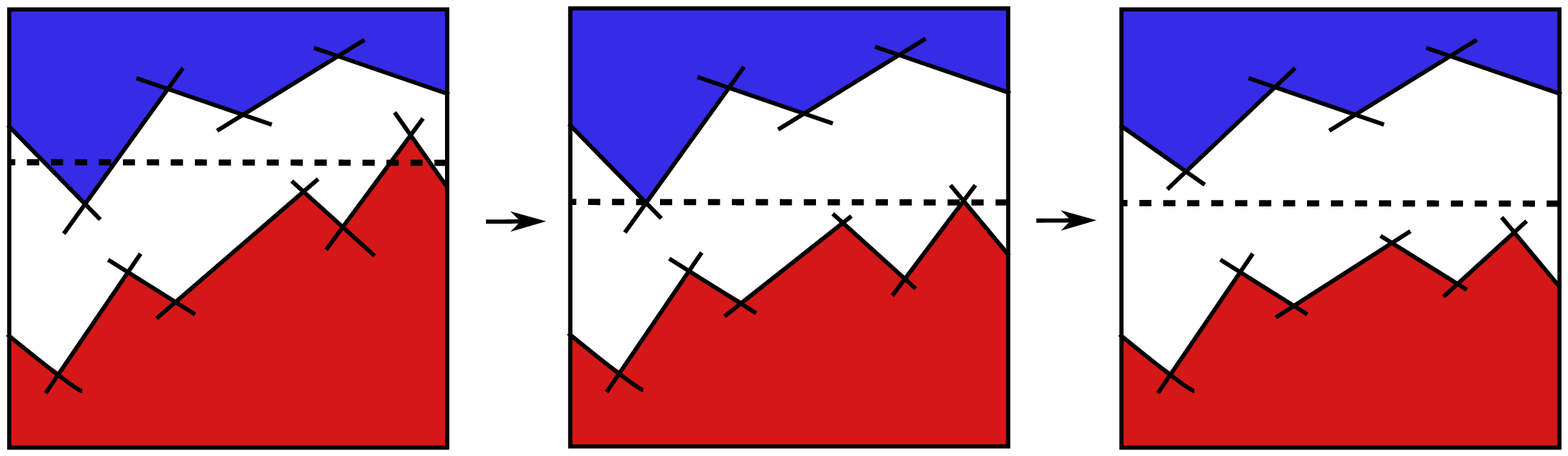}
\caption{}\label{fig:span}
\end{center}
\end{figure}

\begin{lemma}\cite[Lemma 26]{JJ}\label{lem:iso}
Let $g$ and $g'$ be sweep-outs such that $ f\times g$ and $f \times g'$ are generic and $g$ is isotopic to $g'$. There is a family of sweep-outs $\{g_r|r \in [0,1]\}$ such that $g_0=g$, $g_1=g'$ and for all but finitely many $r$, $f \times g_r$ is generic. At the finitely many non-generic points there are at most two valence two or four vertices at the same level or there is a single valence 6 vertex.
\end{lemma}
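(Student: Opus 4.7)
The plan is to adapt the parametric transversality argument of \cite[Lemma 26]{JJ} to the bridge-surface setting, where the only new feature is the requirement that level sets of the sweep-out remain transverse to the tangle $T$. Since transversality to $T$ is an open condition on smooth maps, it does not change the codimension counts that drive the proof.

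First, because $g$ and $g'$ are isotopic sweep-outs for the same bridge splitting, I would fix an ambient isotopy $\{\phi_r\}_{r\in[0,1]}$ of $M$ with $\phi_0=\mathrm{id}$ carrying the spines, level sets, and tangle intersections of $g$ to those of $g'$, and set $g_r = g\circ \phi_r^{-1}$. This produces a continuous path in the space $\mathcal{S}$ of sweep-outs representing the fixed bridge splitting for $(\Sigma',(H'^+,\tau'^+),(H'^-,\tau'^-))$.

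Second, I would describe the non-generic locus $\mathcal{N}\subset\mathcal{S}$ relative to the fixed sweep-out $f$. A sweep-out $h\in\mathcal{S}$ fails to be generic against $f$ precisely when one of the following occurs: (i) two vertices of the graphic of $f\times h$ share a common horizontal or vertical coordinate; or (ii) $f\times h$ fails to be locally stable at some point away from the spines, producing a higher-valence vertex (a codimension-one Cerf singularity such as the coalescence of three elementary tangency/puncture events). A standard stratification shows that each such failure is codimension at least one in $\mathcal{S}$, while any combination of two independent such failures is codimension at least two. Boundary contributions coming from tangencies between $\Sigma'_s$ and the spines $f^{-1}(\pm 1)$ contribute only valence-two vertices on $\partial([-1,1]^2)$, which are handled by the same analysis.

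Third, I would perturb the path $\{g_r\}$, keeping the endpoints fixed, so that it becomes transverse to $\mathcal{N}$. The perturbed path meets the codimension-one stratum of $\mathcal{N}$ transversally at finitely many parameter values and misses every stratum of codimension $\geq 2$. A transverse crossing of the codimension-one stratum corresponds to exactly one of the two permitted events: either two vertices of the graphic (each of valence $2$ or $4$) momentarily align at the same horizontal or vertical level, or one isolated valence-$6$ vertex appears as the local unfolding of a Cerf singularity. The main obstacle is to verify that these perturbations can be performed within the infinite-dimensional class of sweep-outs without destroying their defining structure, namely the topology of the level sets, the spine placement, and transversality with $T$. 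Each of these is an open condition, so generic perturbations automatically preserve them, and the argument reduces to the Heegaard case handled in \cite[Lemma 26]{JJ}.
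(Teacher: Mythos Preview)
The paper does not give its own proof of this lemma; it simply cites \cite[Lemma~26]{JJ} and moves on. Your sketch is the standard Cerf-theoretic/parametric transversality argument underlying that reference, correctly noting that the only new feature in the bridge setting is transversality of level sets to $T$, which is open and so does not alter the codimension stratification. In that sense your proposal is consistent with what the paper implicitly relies on, and there is nothing further to compare.
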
  

We can now prove our first main result.

\begin{proof}[Proof of Theorem \ref{thm:flip}]

Consider the family of sweep-outs $\{g_r|r \in [0,1]\}$ described in Lemma \ref{lem:iso}. As $g_0$ spans $f$ positively and $g_1$ spans $f$ negatively there must be some $r$ such that either $g_r$ splits $f$, $g_r$ spans $f$ both positively and negatively, or the hypothesis of Lemma \ref{lem:other} are satisfied. This is illustrated in Figure \ref{fig:span}.

\medskip
Case 1: $g_r$ splits $f$. In this case by Theorem \ref{thm:split} it follows that $d(\Sss,T)\leq 2-\chi(\Sigma')$ so $\chi(\Sss')\leq 2-d(\Sss,T)$. 

\medskip
Case 2: $g_r$ spans $f$ both positively and negatively. In this case by Theorem \ref{thm:span} it follows that $\chi(\Sigma)\leq  2\chi(\Sigma')$.

\medskip 
Case 3: There are at most two valence two or four vertices at the same level or there is a valence 6 vertex. By an argument identical to the one in the proof of  \cite[Lemma 26]{JJ} it follows that either we are in one of cases 1 or 2 or there is a vertex of valence 4 or valence 6 corresponding to coordinates $(s,t)$ such that for a very small $\epsilon$ the surface $f^{-1}_{t+\epsilon}$ is mostly above $g^{-1}(s)$ and $f^{-1}_{t-\epsilon}$ is mostly below $g^{-1}(s)$. Lemma \ref{lem:other} shows that in this case $\chi(\Sss)\geq -3$ contradicting the hypothesis.
\end{proof}

\begin{figure}
\begin{center}
\includegraphics[scale=.65]{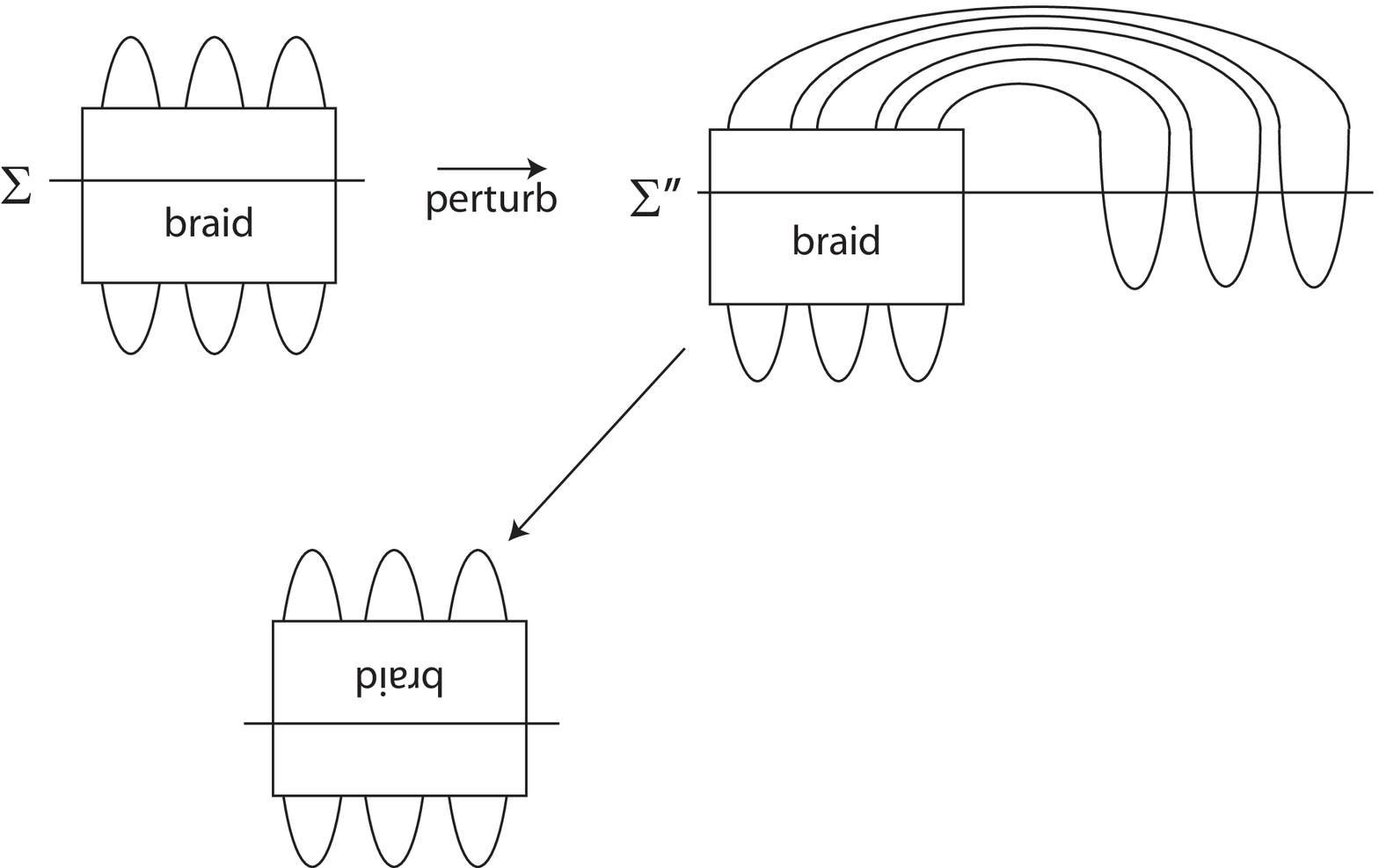}
\caption{}\label{fig:flip}
\end{center}
\end{figure}
\medskip

{\em Proof of Corollary \ref{cor:flip}}
Figure \ref{fig:flip} shows that if $T$ has $n$ bridges with respect to a bridge sphere $\Sss$, then there is a flippable bridge sphere $\Sigma'$ obtained from $\Sss$ by perturbations with respect to which $T$ has $2n$ bridges. The fact that there is no such bridge sphere with fewer punctures follows by Theorem \ref{thm:flip}.
\qed

\section{Bridge surfaces that require a large number of stabilization and perturbations to become equivalent}
\label{sec:example}

{\em Proof of Theorem \ref{thm:knot}}. The proof of this theorem consists of a construction for a pair $(M,K)$ where $K$ is a knot with two distinct bridge surfaces, $\Sss$ and $\Sss'$ with $\chi(\Sss)=2s$ and $\chi(\Sss')=2s-2$ so that for every common stabilization/perturbation $\Sss''$ of $\Sss$ and $\Sss'$, $\chi(\Sss')\leq 3s+2$. In particular this construction gives examples of knots in $S^3$ with distinct bridge spheres with bridge number $2n$ and $2n-1$, respectively, for which any common perturbation has at least $3n-1$ bridges.

Let $K$ be a knot in a manifold $M$ and let $(\Sigma,(H^+,\tau^+),(H^-,\tau^-))$ be a bridge splitting for $(M,K)$ so that, $\chi(\Sigma)\leq -4$ and  $d(\Sigma,K)\geq-3\chi(\Sigma)$. Suppose $f$ is a sweep-out for $M$ associated to $\Sigma$. Let $\Gamma_+$ be a spine of $(H^+,\tau^+)$ and let $\Gamma_-$ be a spine of $(H^-,\tau^-)$ so that $\Gamma_-=f^{-1}(-1)$ and $\Gamma_+=f^{-1}(1)$. Choose an edge of $\Gamma_-$ that has a valence 1 vertex; i.e., an edge that has one endpoint in $K$. Let $B$ be a ball that is a regular neighborhood of this edge and let $M^-$ be the closure of $M \setminus B$ containing the one strand tangle $K^-=K \cap M^-$. The sweep-out $f$ on $M$ can be modified to be a sweep-out of $M^-$ by perturbing $f$ to be constant in $B$. We will use $f$ to refer to either sweep-out when the manifold is clear from context.

Let $P$ be a manifold homeomorphic to $S^2 \times I$ containing two vertical arcs $\tau_1$ and $\tau_2$. Construct a new manifold $M\#M$ by gluing a copy of $M^-$ to each of the boundary sphere of $P$ so that the endpoints of each copy of $K^-$ are identified with one endpoint of $\tau_1$ and one endpoint of $\tau_2$ to obtain a new knot $K\#K$. Then $(M\#M,K\#K)$ is the connect sum of two copies of $(M,K)$.

The pair $(M\#M, K\#K)$ has two natural generalized Heegaard splittings $\mathcal{H}_1$ and $\mathcal{H}_2$ induced by the bridge splittings for $M$ and $P$, shown in Figures~\ref{fig:generalized} and~\ref{fig:composite}. In both cases we will take $\Sigma$ to be the bridge surface for each copy of $(M^-,K^-)$. However in the first generalized Heegaard splitting we will take the surface $S^2 \times \{1/2\}$ to be the bridge surface for $(P, \tau_1 \cup \tau_2)$ and for the second one we will take the bridge surface for $P$ to be the surface obtained by tubing together the two spheres which are boundaries of small collars of $S^2 \times \{0\}$ and $S^2 \times \{1\}$ respectively along a vertical tube, see Figure \ref{fig:generalized}. Let $\Sigma_1$ and $\Sigma_2$ be the two bridge surfaces for $(M\#M,K\#K)$ obtained by amalgamating $\mathcal{H}_1$ and $\mathcal{H}_2$ respectively. Note that $\chi(\Sigma_1)=2\chi(\Sigma)$ and $\chi(\Sigma_2)=2\chi(\Sigma)-2$.

\begin{figure}
\begin{center}
\includegraphics[scale=.5]{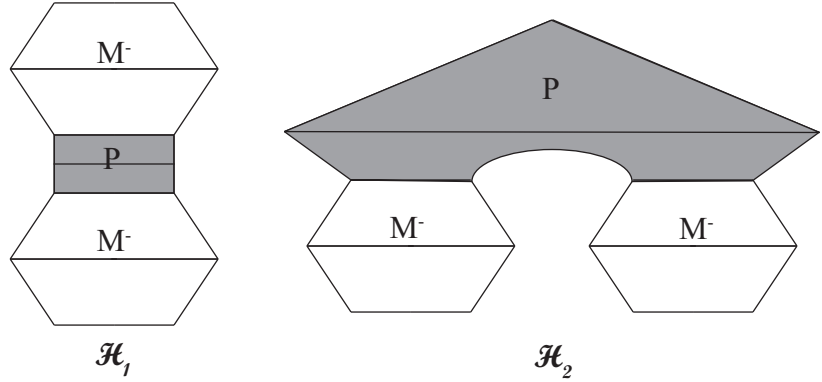}
\caption{Schematic depiction of $\mathcal{H}_1$ and $\mathcal{H}_2$.}\label{fig:generalized}
\end{center}
\end{figure}

\begin{figure}
\begin{center}
\includegraphics[scale=.5]{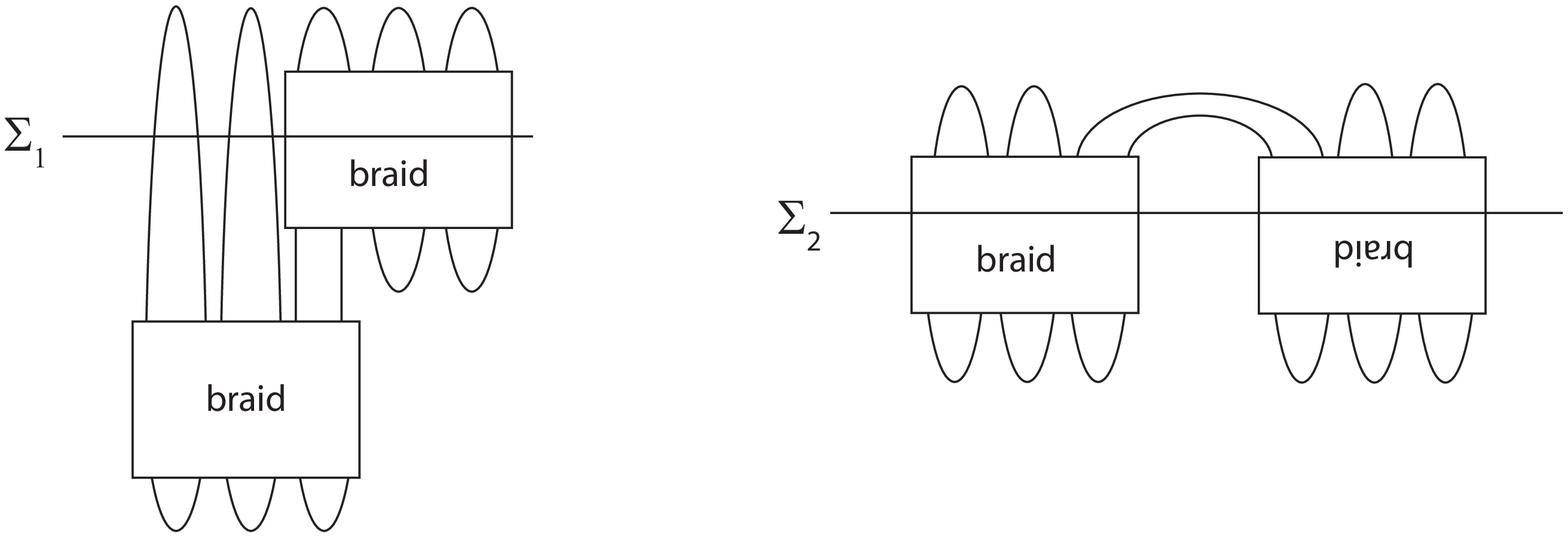}
\caption{The figure depicts $\Sigma_1$ and $\Sigma_2$ if $M=S^3$ and $\Sigma=S^2$.}\label{fig:composite}
\end{center}
\end{figure}

\begin{lemma}\label{lem:example}
If $\Sigma'$ is isotopic to a surface obtained via a sequence of stabilizations and perturbations of $\Sigma_1$ and also to a surface obtained via sequence of stabilizations and perturbations of $\Sigma_2$, then $\chi(\Sigma') \leq 3\chi(\Sigma)+2$.
\end{lemma}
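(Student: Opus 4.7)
My plan is to apply Theorem~\ref{thm:boundbridge} to each of the two embedded copies $(M^-_1, K^-_1)$ and $(M^-_2, K^-_2)$ of $(M^-, K^-)$ inside $(M\#M, K\#K)$, taking $\Sigma$ as bridge surface of $(M^-, K^-)$ and $\Sigma'$ as bridge surface of $(M\#M, K\#K)$. For each copy the theorem produces a trichotomy. The alternative $\chi(\Sigma)\geq -2$ is ruled out immediately by the hypothesis $\chi(\Sigma)\leq -4$ built into the construction. The alternative $d(\Sigma, K^-_i)\leq 2-\chi(\Sigma')$ instantly yields the desired conclusion once one checks, via a short compression body argument, that drilling the small ball $B$ out of a single compression body does not decrease distance, so that $d(\Sigma, K^-_i)\geq d(\Sigma, K)\geq -3\chi(\Sigma)$ by the choice of $\Sigma$; substituting then gives $\chi(\Sigma')\leq 2+3\chi(\Sigma)$.

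The remaining case is when the parallelism alternative of Theorem~\ref{thm:boundbridge} holds for both $i=1,2$: after isotopy and some c-compressions, $\Sigma'\cap M^-_i$ admits a component parallel to $\Sigma$. Maximally c-compressing and using the classification of c-incompressible surfaces in $\Sigma\times I$ from \cite[Corollary~3.7]{T1}, together with primeness of the tangle to rule out $T$-disjoint spheres, shows that each resulting surface consists of copies of $\Sigma$ together with twice-punctured $2$-spheres of zero Euler characteristic. Since c-compressions cannot decrease Euler characteristic, this forces $\chi(\Sigma'\cap M^-_i)\leq n_i\chi(\Sigma)$, where $n_i\geq 1$ counts the $\Sigma$-parallel components on side $i$. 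Summing along the decomposition $M\#M = M^-_1\cup P\cup M^-_2$ and using additivity of Euler characteristic across circle boundaries, $\chi(\Sigma') \leq (n_1+n_2)\chi(\Sigma) + \chi(\Sigma'\cap P)$.

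The main obstacle is controlling the last term $\chi(\Sigma'\cap P)$ in the sharp case $n_1=n_2=1$, in which the desired bound reduces to $\chi(\Sigma'\cap P)\leq \chi(\Sigma)+2$. Here I would exploit the fact that $\Sigma_1$ and $\Sigma_2$ differ only inside $P$, by a tube-type modification, and so present in effect two inequivalent bridge structures on the tangle $(P,\tau_1\cup \tau_2)$; a common stabilization/perturbation $\Sigma'$ must therefore realize a flip between them inside $P$. A relative version of the flipping argument used to prove Theorem~\ref{thm:flip}, combining the span/split dichotomy from Sections~\ref{sec:span}--\ref{sec:split} with the high-distance hypothesis propagated across the connect-sum spheres, should force $\Sigma'\cap P$ to be complex enough to satisfy this bound. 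Making this relative flipping argument precise — in particular controlling how stabilizations and perturbations performed away from $P$ can nonetheless influence $\Sigma'\cap P$ — is the technical heart of the proof.
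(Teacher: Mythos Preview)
Your outline diverges from the paper's argument at the crucial step, and the divergence creates a genuine gap.

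Applying Theorem~\ref{thm:boundbridge} separately to $(M^-_1,K^-_1)$ and $(M^-_2,K^-_2)$ gives you, in the parallelism case, only $n_i\geq 1$ on each side. But that conclusion uses nothing about $\Sigma'$ being a \emph{common} perturbation of $\Sigma_1$ and $\Sigma_2$: any bridge surface for $(M\#M,K\#K)$ satisfies it. So the entire strength of the hypothesis is pushed into your proposed bound $\chi(\Sigma'\cap P)\leq \chi(\Sigma)+2$, and this is where the argument breaks down. The pair $(P,\tau_1\cup\tau_2)$ is a trivial two-strand tangle in $S^2\times I$; its bridge surfaces have bounded (in fact tiny) complexity and distance, independent of $\chi(\Sigma)$. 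The distinction between $\Sigma_1$ and $\Sigma_2$ is not a local feature of $P$ at all---it is the relative orientation of the two copies of $\Sigma$ in $M^-_1$ and $M^-_2$, a global phenomenon. There is no local flip invariant inside $P$ that could force $\Sigma'\cap P$ to have Euler characteristic comparable to $\chi(\Sigma)$. You acknowledge that stabilizations performed away from $P$ can influence $\Sigma'\cap P$, but ``making this precise'' is not a technical detail here; it is the whole problem, and the decomposition you have set up gives no leverage on it. A second, related issue: the isotopy of $\Sigma'$ supplied by Theorem~\ref{thm:boundbridge} for $i=1$ need not agree with the one for $i=2$, so your additive decomposition $\chi(\Sigma')=\sum_i\chi(\Sigma'\cap M^-_i)+\chi(\Sigma'\cap P)$ is not applied to a single well-defined position of $\Sigma'$.

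The paper's proof avoids this by never decomposing along $P$ in that way. Instead it runs a one-parameter family of sweep-outs $g_r$ for $\Sigma'$ interpolating between one that spans $f_1,f_2$ both positively (inherited from $\Sigma_1$) and one that spans them with opposite signs (inherited from $\Sigma_2$). At some $r$ either $g_r$ splits one of the $f_i$ (giving the distance bound), or $g_r$ spans $f_1$ \emph{both} positively and negatively while still spanning $f_2$. Theorem~\ref{thm:span} then yields two $\Sigma$-parallel pieces on the $M^-_1$ side and one on the $M^-_2$ side---but for possibly different level values $s$ and $u$ of the sweep-out. The $+2$ in the final bound comes from a careful $\alpha$-isotopy argument (the Claim about dual disks $E_k$) tracking exactly how $\Sigma'_s\cap M^-_2$ and $\Sigma'_u\cap M^-_2$ differ across the single decomposing sphere $S$, and showing that at most one twice-punctured disk compression is needed to pass between them. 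That coordination of the two sides at a single moment of the sweep-out is the idea your approach is missing.
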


\begin{proof}
Let $\mathcal {H}^-_i$ and $\mathcal {H}^+_i$ be the thin and thick surfaces for $\mathcal{H}_i$. Then $M\#M-\mathcal{H}^-_i$ has two components that are homeomorphic to $M^-$ and each of these components has a Heegaard surface $\Sigma$. Let $M^-_1$ and $M^-_2$ be these components and let $f_1$ and $f_2$ be their sweep-outs associated to $\Sigma$. 

The following remark is clear in the case when $M=S^3$ and $\Sigma=S^2$, as shown in Figure \ref{fig:composite}. In the general case the proof is very similar to the proof of \cite[Lemma 14]{JJ} so we leave the details to the reader. 
 
\begin{rmk}[See Lemma 14, \cite{JJ}] \label{rmk:amalgamation}
Let $\fp$ be the sweep-out associated to $\Sigma_1$ and $\fm$ be the sweep-out associated to $\Sigma_2$. Then $\fp$ spans $f_1$ and $f_2$ positively and $\fm$ spans $f_1$ positively and $f_2$ negatively. \end{rmk}

Let $g$ and $g'$ be sweep-outs for $(M\#M,K\#K)$ defined by perturbing and stabilizing sweep-outs $\fp$ and $\fm$ enough times so that $g$ and $g'$ represent isotopic bridge decompositions. By Lemma \ref{lem:span} it follows that $g$ spans $\fp$ positively. By Remark \ref{rmk:amalgamation} it follows that $\fp$ spans both $f_1$ and $f_2$ positively. Therefore we conclude that $g$ spans both $f_1$ and $f_2$ positively. Similarly, $g'$ spans $f_1$ positively and $f_2$ negatively.

As $g$ and $g'$ represent isotopic bridge decompositions, the sweep-out $g$ is isotopic to either $g'$ or $-g'$.  In other words, there is a family of sweep-outs $\{g_r|r \in [0,1]\}$ such that $g_0=g$, $g_1=\pm g'$. Consider the family of sweep-outs $\{g_r|r \in [0,1]\}$ described in Lemma \ref{lem:iso}. Because $g'$ spans $f_1$ positively and $f_2$ negatively, the sweep-out $g_1 = \pm g'$ spans one of $f_1$ or $f_2$ positively and the other negatively.  Without loss of generality, assume $g_1$ spans $f_1$ negatively.  As $g_0$ spans $f_1$ positively and $g_1$ spans $f_1$ negatively, Lemma~\ref{lem:iso} implies that there is an $r$ satisfying one of the following:

\medskip
Case 1: $g_r$ splits $f_1$ or $g_r$ splits $f_2$. The argument is the same so suppose $g_r$ splits $f_1$. In this case by Theorem \ref{thm:split} it follows that $d(\Sigma_1,K) \leq 2-\chi(\Sigma')$. As $d(\Sigma_1,K) \geq -3\chi(\Sigma)$ by construction, it follows that $\chi(\Sigma') \leq 2+4\chi(\Sigma)\leq 3\chi(\Sigma)+2$ as required. 

\medskip
Case 2: $g_r$ spans $f_1$ both positively and negatively and $g_r$ spans $f_2$, say positively.  

By the definition of spanning there exist $s$ and $t_+>t_0>t_-$ such that $(\Sigma_1)_{t_+}$ and $(\Sigma_1)_{t_-}$ are mostly above $\Sss'_s\cap M^-_1$ and $(\Sigma_1)_{t_0}$ is mostly below $\Sss'_s\cap M^-_1$ and there exist $u$ and $t'_0 < t'_+$ so that $(\Sigma_2)_{t'_0}$ is mostly below $\Sss'_u \cap M^-_2$ and $(\Sigma_2)_{t'_+}$ is mostly above $\Sss'_u\cap M^-_2$. If we can choose $s$ and $u$ to be equal, then by Theorem \ref{thm:span} it follows that $\chi(\Sss'_u)=\chi(\Sss'_u \cap M^-_1)+\chi(\Sss'_u\cap M^-_2)\leq 2\chi(\Sigma_1)+\chi(\Sigma_2)=3\chi(\Sigma)$ as desired. Suppose that no such value exists. Without loss of generality suppose that $s < u$ and choose $s$ and $u$ to be such that $u-s$ is minimal. By the choice of $s$ and Theorem \ref{thm:span} it follows that $\chi(\Sss'_s\cap M^-_1 )\leq 2\chi(\Sigma_1)$. 

Let $S$ be the decomposing sphere for $(M\#M, K\#K)$ (we may take $S=S^2\times \{1/2\}$ in $P$). The surface $\Sss'_u$ can be obtained from $\Sss'_s$ by a series of boundary compressions of $\Sigma'_u \cap M^-_1$ and $\Sigma'_u \cap M^-_2$ together with isotopies of the surface that are the identity in a neighborhood of $S$. Let $\Sigma'_0, \Sigma'_1,...,\Sigma'_n$ be the sequence of surfaces so that $\Sigma'_0$ is isotopic relative to $S$ to $\Sigma'_s$,  $\Sigma'_n$ is isotopic relative to $S$ to $\Sigma'_u$ and $\Sigma'_k$ is obtained from $\Sigma'_{k-1}$ by performing a boundary compression of $\Sigma'_{k-1} \cap M^-_1$ or a boundary compression of $\Sigma'_{k-1} \cap M^-_2$ along a disk $D_k$. Following \cite{J} we will call these boundary compressions $\alpha$-isotopies of $\Sigma'$ and we will call the boundary compressing disks, $\alpha$-disks. As the isotopy between $\Sigma'_s$ and $\Sigma'_u$ represents a sweep-out, all $\alpha$-disks are on the same side of $\Sss'$, in this case, the positive side as $s <u$.

Every $\alpha$-disk $D_k$ has a dual $\alpha$-disk $E_k$ contained in the negative side of $\Sigma'_k$ that can be used to perform an $\alpha$-isotopy on $\Sigma'_{k}$ to recover $\Sigma'_{k-1}$, as in Figure \ref{fig:dual}.  

\begin{figure}
\begin{center}
\includegraphics[scale=.5]{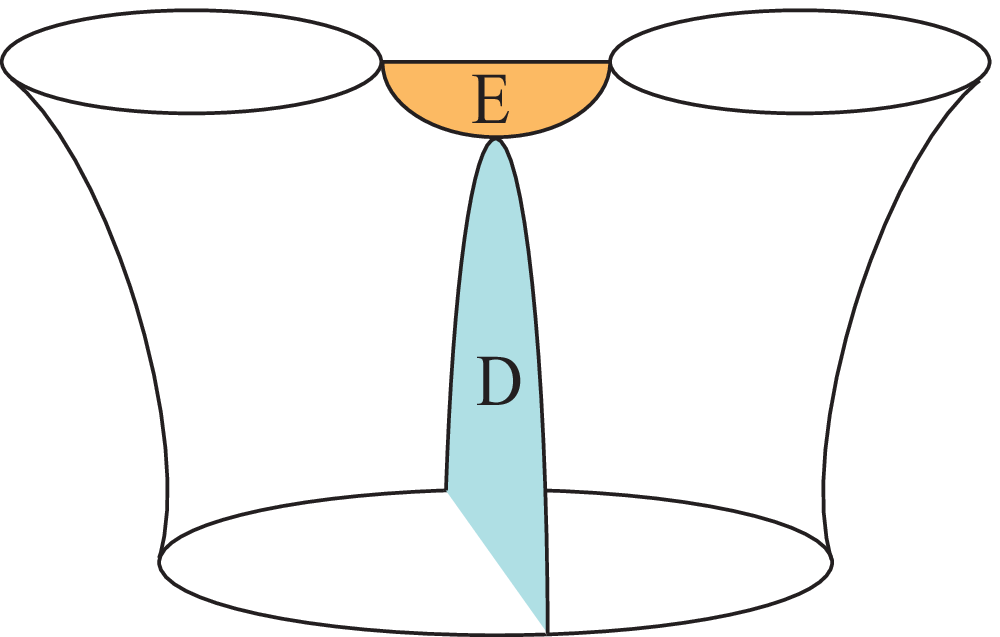}
\caption{}\label{fig:dual}
\end{center}
\end{figure}

\textbf{Claim:} There is a collection of disks $E_1,...,E_n$ such that for every $k$ and $j$ the following hold

\begin{itemize}
\item $\bdd E_k$ is the endpoint union of an arc in $S$ and an arc in $\Sss'_u$,
\item $E_k \cap E_j$ is either empty or it is equal to some disk $E_l$,
\item one component of the boundary of an $\epsilon$-neighborhood of $\Sss'_u \cup (E_n\cup...\cup E_{k+1})$ is isotopic to $\Sss'_u$ relative to $S$ and the other is isotopic to $\Sss'_k$ relative to $S$,
\item $E_k$ is an $\alpha$-disk for $\Sigma'_k$.

\end{itemize}
In particular if $\mathcal{E}=\cup_1^n E_i$ then one component of the boundary of an $\epsilon$-neighborhood of $\Sss'_u \cup \mathcal{E}$ is isotopic to $\Sss'_u$ relative to $S$ and the other is isotopic to $\Sss'_s$ relative to $S$.

{\em Proof of claim:} Let $\mathcal{E}_j=\cup_j^n E_j$. We will prove the claim by induction on $n-j$. The result is clear for $\mathcal{E}_n$ as this collection contains a single disk. Suppose the result holds for $\mathcal{E}_{k+1}$. Therefore $\Sigma'_k$ is isotopic relative $S$ to one of the two boundary components of a regular neighborhood of $\Sss'_u \cup \mathcal{E}_{k+1}$. 
Consider a disk $D_k$ that realizes the $\alpha$-isotopy between $\Sss'_{k-1}$ and $\Sss'_k$. Then the boundary of a regular neighborhood of $\Sss'_{k-1}\cup D_k$ has two components, one is $\Sss'_{k-1}$ and the other is $\Sss'_k$. If $D_k \cap \mathcal{E}_{k+1}=\emptyset$, let $E_k$ be the disk dual to $D_k$. If $D_k$ has a nonempty intersection with each of $E_{i_1},..., E_{i_l}$, let $\gamma_{i_r}$ be a small neighborhood of $D \cap E_{i-r}$ in $\bdd E_{i+r}$. Identify all arcs $\gamma_{i_r}$ for $r=1,...,l$ to a single arc $\gamma$ and let $E_k$ be the disk dual to $D_k$ so that $E_k \cap \Sigma'=\gamma$. 

\medskip

Consider the surface $\Sigma'_u \cap M^-_2$. By Theorem \ref{thm:span} there exists a collection ${\bf D}$ of compressing and cut-compressing disks for $\Sigma'_u \cap M^-_2$ after which the resulting union of surfaces $(\Sigma'_u)^{\bf D} \cap M^-_2$ contains a component parallel to $\Sigma_2$.  We can choose these disks so that a subcollection $\bdd\mathcal{D}_u$ is contained in $S$ and so that compressing $\Sigma'_u$ along the collection $\bdd\mathcal{D}_u$ produces a surface disjoint from $S$.  Note that $\chi((\Sigma'_u)^{\bf D}\cap M^-_2) \leq \chi(\Sigma)$

The surface $\Sigma'_u$ is isotopic to the union of $(\Sigma'_u)^{\bf D}$ together with some tubes between the components. Some of these tubes may run along the knot. Let $\mathcal{D}_u\subset {\bf D}$ be the collecting of possibly nested disks and punctured disks these tubes bound in $S$. Then $\mathcal{E}\cap S$ is a collection of disjoint arcs $\Lambda$ with endpoints in $\bdd\mathcal{D}_u$. Label the regions of $S-\Sigma'_u$ positive or negative depending on which side of $\Sigma'_u$ they lie in and recall that the collection of arcs $\Lambda$ lies in the negative regions. 

The surface $S$ intersects the knot $K$ in two points, which we will label $p_1$, $p_2$.  The curves in $\Sigma_u' \cap S$ can be classified into three categories depending on whether the corresponding c-disk in $\mathcal{D}_u$ is a disk, a punctured disk containing $p_1$, or a punctured disk containing $p_2$. Let $\Gamma_1$, $\Gamma_2$ and $\Gamma_3$ be these collection of curves and let $\Gamma=\Gamma_1\cup\Gamma_2 \cup \Gamma_3$. Note that the only arcs in $\mathcal{E}\cap S$ that have endpoints in both $\Gamma_2$ and $\Gamma_3$ are between the outermost curve $\gamma_2$ in $\Gamma_2$ and the outermost curve $\gamma_3$ in $\Gamma_3$. Also note that if an arc has both of its endpoints in a curve $\gamma$ so that $\gamma \in \Gamma_2 \cup \Gamma_3$, then the arc is parallel to a subarc of $\gamma$. Let $\Delta$ be the union of all these, possibly nested, disks of parallelism.

Let $p$ be a point in $S-(\mathcal{D}\cup \Delta)$. Let $F$ be the twice punctured disk obtained by removing a neighborhood of $p$ from $S$. Then $\mathcal{D}_u \subset F$ and no curve in $\Sigma'_u\cap F$ is parallel to $\bdd F$. Furthermore the boundary of a regular neighborhood of $(\mathcal{E}\cap S)\cup\Gamma$ contains at most one curve that bounds a twice punctured disk in $F$. This curve is obtained by taking a regular neighborhood of the component of $\Gamma\cup (\mathcal{E}\cap S)$ containing $\gamma_2 $, $\gamma_3$ and an arc of $\mathcal{E}\cap S$ connecting the two. Let $\mathcal{D}_s$ be the collection of possibly punctured disks that the boundary of a regular neighborhood of $(\mathcal{E}\cap S)\cup\Gamma$ bounds in $F$.

By the claim, the boundary of a regular neighborhood of $\Sigma'_u \cup \mathcal{E}$ contains two surfaces. One is isotopic to $\Sigma'_u$ relative $S$, and the other one is isotopic to $\Sigma'_s$ relative to $S$. Then $\Sigma'_s\cap S = \bdd \mathcal{D}_s$. Let $(\Sigma'_s)^{\mathcal{D}_s}$ be the surface obtained from $\Sigma'_s$ by compressing it along $\mathcal{D}_s$. Note that some of these compressions may be along cut-disks and at most one may be along a disk with two punctures. Therefore $\chi(\Sigma'_s \cap M^-_2)\leq \chi((\Sigma'_s)^{\mathcal{D}_s}\cap M^-_2)+2$.
Note that $(\Sigma'_s)^{\mathcal{D}_s} \cap M^-_2$ is isotopic to $(\Sigma'_u)^{\mathcal{D}_u} \cap M^-_2$ so has Euler Characteristic at most $\chi(\Sigma)$ and therefore $\chi(\Sigma'_s \cap M^-_2)\leq \chi(\Sigma)+2$. On the other hand, by our choice of $s$, $\chi(\Sigma'_s \cap M^-_1)\leq 2\chi(\Sigma)$. Therefore $\chi(\Sigma')=\chi(\Sigma'_s \cap M^-_1)+\chi(\Sigma'_s \cap M^-_2)\leq 3\chi(\Sigma)+2$ as desired.

\medskip 
Case 3: There are at most two valence two or valence four vertices at the same level or there is a valence 6 vertex.  As in Theorem \ref{thm:flip} this implies that $\chi(\Sigma)\geq -3$ contrary to our hypothesis.

\end{proof}

\end{document}